\documentclass{article}

\usepackage[accepted]{icml2026}

\usepackage{microtype}
\usepackage{graphicx}
\usepackage{booktabs} 

\usepackage{xcolor}
\usepackage{amsmath,amssymb,amsfonts}
\usepackage{mathtools}
\usepackage{amsthm}
\usepackage{bm}
\usepackage{dsfont}
\usepackage[]{algorithmic, algorithm}
\usepackage{enumitem}
\usepackage{natbib}
\usepackage{verbatim}

\usepackage{multirow}
\usepackage{longtable}

\usepackage[hidelinks]{hyperref}
\usepackage[capitalize,nameinlink]{cleveref}

\usepackage{subcaption} 
\usepackage{caption}

\usepackage{url}
\usepackage{cite}

\let\temp\phi
\let\phi\varphi
\let\varphi\temp

\let\temp\varepsilon
\let\epsilon\varepsilon
\let\varepsilon\temp

\DeclareMathOperator*{\argmin}{arg\,min}

\newcommand{\prox}[1]{\mathrm{prox}_{#1}}
\newcommand{\env}[1]{\mathrm{env}_{#1}}

\newcommand{\iprod}[2]{\langle #1, #2 \rangle}

\newcommand{\R}{\mathbb{R}}

\newcommand{\N}{\mathbb{N}}

\newcommand{\E}{\mathbb{E}}

\newcommand{\Id}{\bm{\mathrm{Id}}} 

\newcommand{\Adam}{{\texttt{Adam}}}
\newcommand{\SGD}{{\texttt{SGD}}}

\newcommand{\SPP}{{\texttt{SPP}}}
\newcommand{\SPS}{{\texttt{SPS}}}
\newcommand{\NGN}{{\texttt{NGN}}}

\definecolor{kleinblue}{RGB}{0, 47, 167}
\definecolor{royalblue}{RGB}{0, 33, 115}
\hypersetup{
	colorlinks = True,
	linkcolor = royalblue,
	citecolor=royalblue,
	urlcolor=royalblue
} 

\definecolor{todored}{RGB}{189, 30, 30}

\definecolor{nypink}{RGB}{216, 131, 115}
\definecolor{commblue}{RGB}{0, 66, 102}

\definecolor{takeawaycolor}{RGB}{155, 157, 137}
\definecolor{StrongRed}{RGB}{230, 57, 70}
\definecolor{OtherBlue}{RGB}{26, 101, 158}
\definecolor{DarkGreen}{RGB}{39,130,67}
\definecolor{DarkRed}{RGB}{192,47,25}

\usepackage[colorinlistoftodos]{todonotes}

\newcommand{\takeaway}[1]{\todo[inline,bordercolor=takeawaycolor,backgroundcolor=takeawaycolor!20,linecolor=takeawaycolor]{\textbf{Takeaway: }#1}}

\theoremstyle{plain}
\newtheorem{theorem}{Theorem}[section]

\newtheorem{lemma}[theorem]{Lemma}

\theoremstyle{definition}

\theoremstyle{remark}
\newtheorem{remark}[theorem]{Remark}

\usepackage[T1]{fontenc}

\icmltitlerunning{Step-Size Stability in Stochastic Optimization}

\begin{document}

\twocolumn[
  \icmltitle{Step-Size Stability in Stochastic Optimization: A Theoretical Perspective}



  \icmlsetsymbol{equal}{*}

  \begin{icmlauthorlist}
    \icmlauthor{Fabian Schaipp}{inria}
    \icmlauthor{Robert M.\ Gower}{fi}
    \icmlauthor{Adrien Taylor}{inria}
  \end{icmlauthorlist}

  \icmlaffiliation{inria}{Inria, Departement d'Informatique de l'Ecole Normale Superieure, PSL Research University, Paris, France}
  \icmlaffiliation{fi}{CCM, Flatiron Institute, New York City}

  \icmlcorrespondingauthor{Fabian Schaipp}{fabian.schaipp@inria.fr}

  \icmlkeywords{Optimization, Polyak step size}

  \vskip 0.3in
]



\printAffiliationsAndNotice{}  

\begin{abstract}
  We present a theoretical analysis of stochastic optimization methods in terms of their sensitivity with respect to the step size.
    We identify a key quantity that, for each method, describes how the performance degrades as the step size becomes too large.
    For convex problems, we show that this quantity directly impacts the suboptimality bound of the method.
    Most importantly, our analysis provides direct \emph{theoretical} evidence that adaptive step-size methods, such as \SPS{} or \NGN{}, are more robust than \SGD{}.
    This allows us to quantify the advantage of these adaptive methods beyond empirical evaluation.
    Finally, we show through experiments that our theoretical bound qualitatively mirrors the actual performance as a function of the step size, even for non-convex problems.
\end{abstract}

\section{Introduction}
Training machine learning models requires algorithms that are at the same time efficient and robust. Several studies have shed light on the effect of subtle choices in architecture, loss function or optimizer on training efficiency and stability \citep{Wortsman2024,Everett2024,Olmo2025}. Specifically \citet{Wortsman2024} focus on the effect on the range of learning rates, for which training proceeds smoothly. This is not surprising, as it is widely accepted in the optimization and machine learning community that the learning rate is arguably the most important hyperparameter to tune \citep{Bengio2012}.

For this paper, we will focus on the effect of the \emph{optimization algorithm} on training stability. In the past, several methods have been designed to facilitate or even fully avoid the issue of tuning the learning rate (more in \cref{sec:related-work} below). While empirical evaluation in machine learning is closely paying attention to the issue of learning-rate sensitivity, optimization theory on the other hand puts more focus on convergence rates (where it is sufficient that the optimal rate is attained for \emph{some} step size).

To fill in this gap, here we compare -- with a theoretical framework -- how robust several \emph{stochastic} optimization methods are with respect to their step size. In particular, we focus on stochastic gradient descent (\SGD{}), the stochastic Polyak step size (\SPS{}), non-negative Gauss-Newton steps~(\NGN{}), and stochastic proximal point (\SPP{}). Throughout the paper we will use the term \emph{sensitivity} to refer to the final loss/suboptimality as a function of (the scale of) the step size: for small step sizes, all methods will only progress slowly; for large step sizes however, some methods behave more stably than others.

We summarize our contributions below.

\paragraph{Summary and contributions.} 
\begin{itemize}
    \item We identify a key quantity within our theoretical framework, which we call the \emph{stability index}, for determining how the method's suboptimality behaves as a function of the step size. Our theory covers a general family of model-based methods and applies to \SGD{}, \SPS{} \SPP{}, and partially \NGN{} in the convex and weakly convex setting. 
    \item Previous \emph{empirical} results suggest that \SPS{}, \NGN{}, and \SPP{} are always less sensitive to large step sizes compared to \SGD{}.  We will show that this is provably true.
    In particular, our results show that \textbf{\NGN{} and \SPS{} are always at least as stable as \SGD{}}. For \SPS{}, this holds for any guess of the optimal value/lower bound; interpolation is not necessary to obtain an advantage, but it helps.
    Our analysis also reveals the key quantities to determine the gain in stability, which are related to interpolation and the under-estimation of the optimal (batch) loss. 
    \item In several convex and non-convex toy experiments (linear/logistic regression, training a \texttt{ResNet} on \texttt{CIFAR10}), we track the theoretical quantities previously obtained to explain the improved stability of \SPS{}, \NGN{}, and \SPP{}. We show that the actual performance and the theoretical suboptimality bound match closely in predicting the range of learning rates that give good performance. Surprisingly, this is also true for non-convex problems, where our theory is not applicable.
\end{itemize}

\paragraph{Limitations.} The goal of this paper is predominantly to explain empirically observed phenomena with a general theoretical framework. Even though our experiments suggest a close match between theory and practice, we leave it open how to exploit this for practical applications. Further, some parts of our theory are restricted to convex problems, and only to a specific family of methods; in particular, it does not cover momentum, or preconditioning methods such as \Adam{}, which are prevalent in practice.

\paragraph{Setup and background.}
We consider the training problem
\begin{align}\label{eqn:problem}
    \min_{x\in \R^d} f(x), \quad f(x) :=  \E_{s}[f(x,s)].
\end{align}
Here, $x \in \R^d$ are the learnable parameters of a machine learning model, and $f$ is the loss function. In $f$, the expectation is taken over the distribution of a random variable $s$ that maps to the space or set $\mathcal{S}$ (typically the training set).
We assume that $f(\cdot,s)$ has a suitable subdifferential for every $s\in \mathcal{S}$ (for example, see \citet{Rockafellar1970,Clarke1983,Bolte2021}). We denote elements of the subdifferential by $g\in \partial f(x,s)$. We assume that problem \eqref{eqn:problem} admits a solution $x_\star$ and denote $f_\star := f(x_\star)$.

We briefly introduce the four methods we will mainly focus on throughout: for a step size $\alpha>0$\footnote{For now we assume a constant step size purely to keep the presentation simple; all results later work for time-dependent step sizes $\alpha_t$.}, \emph{stochastic gradient descent} is given by
\begin{align}\label{eqn:sgd}\tag{\SGD{}}
    x_{t+1} = x_t - \alpha g_t, \quad g_t \in \partial f(x_t, s_t).
\end{align}
It usually becomes unstable when the magnitude of $\alpha > 0$ increases.
It has been shown empirically that other methods are much less sensitive to large values of $\alpha$: in particular, this is known for the \emph{stochastic Polyak step size} \citep{Polyak1987,Asi2019,Loizou2021}: for a known lower bound $C_t \leq \inf_{z\in \R^d} f(z, s_t)$, let $g_t \in \partial f(x_t, s_t)$, then
\begin{align}
    \label{eqn:sps}\tag{\SPS{}}
    x_{t+1} = x_t - \min\{\alpha, \frac{f(x_t,s_t) - C_t}{\|g_t\|^2} \} g_t.
\end{align}
The above method allows for much larger $\alpha$ than \SGD{} across many problem instances in machine learning \citep{Schaipp2023,Schaipp2024a}.
Closely related to the Polyak step size is the non-negative Gauss-Newton step \citep{Orvieto2024}. In the stochastic setting, with $g_t \in \partial f(x_t, s_t)$, it is given by
\begin{align}
    \label{eqn:ngn}\tag{\NGN{}}
    x_{t+1} = x_t - \frac{\alpha}{1+\frac{\alpha}{2f(x_t,s_t)}\|g_t\|^2}g_t.
\end{align}

Finally, it has also been shown empirically that the stochastic proximal point method, given by
\begin{align}
    \label{eqn:spp}\tag{\SPP{}}
    x_{t+1} = \argmin_{y\in \R^d} f(y,s_t) + \frac{1}{2\alpha} \|y-x_t\|^2,
\end{align}
is less sensitive to the choice of $\alpha$ than \SGD{} \citep{Asi2019,Davis2019,Milzarek2024}.

In convex, nonsmooth optimization, we can usually derive bounds\footnote{For the concrete realization of such a bound, see \cref{thm:last-iterate}.} of the form
\begin{align*}
    \E[f(x_T) - f_\star] \leq \frac{\mathcal{T}_1}{\alpha} + \mathcal{T}_2(\alpha),
\end{align*}
where the term $\mathcal{T}_1/\alpha$ can be seen as a bias term; usually we have that $\mathcal{T}_1 \propto 1/T$.
However, the sensitivity to large step sizes is determined by the second term $\mathcal{T}_2(\alpha)$, reflecting the variance; for \SGD{} this term grows linearly with $\alpha$.

We will show that for other methods such as \SPS{}, \NGN{}, and \SPP{} the scaling  of $\mathcal{T}_2(\alpha)$ in $\alpha$ is much more benign. Further, we will show through experiments that the theoretical bound closely reflects the actual performance with respect to learning-rate tuning. This gives a theoretical explanation for the empirically observed improvements in stability for \SPS{}, \NGN{}, and \SPP{}.

\paragraph{Notation.}
Unless explicitly stated otherwise, $\|\cdot\|$ and $\iprod{\cdot}{\cdot}$ denote the standard Euclidean norm and its scalar product.
Throughout we will refer to $\alpha$ (and later $\alpha_t$) as the \emph{step size}, in the sense that this parameter has to be user-specified/tuned in practice. However, note that for the update rules of \eqref{eqn:sps} and \eqref{eqn:ngn} the \emph{effective} step size, i.e., the multiplier of $g_t$, is different from $\alpha$ in general. 
\section{Related Work}\label{sec:related-work}
\paragraph{Model-based stochastic optimization.} The core concept of model-based optimization is, in each iteration, to construct a simplified model/surrogate of the objective, and to minimize this surrogate in a proximal fashion. Theory for this family of methods has been established in the seminal works of \citet{Asi2019} and \citet{Davis2019}. Concretely, \citet{Asi2019,Asi2019a} prove that models which share a certain lower-bound condition\footnote{For details, see condition (C.iv) in \citet{Asi2019}.} lead to methods that are stable, in the sense that the iterates are almost surely bounded for square-summable step sizes \citep[Cor.\ 3.5]{Asi2019}. However, this result does not quantify how performance degrades as the scale of the step sizes grows. Moreover, the convergence result that follows from stability \citep[Prop.\ 3.8]{Asi2019} is asymptotic. We refine this analysis by looking at non-asymptotic bounds, and their scaling with respect to the step size $\alpha$.
\paragraph{The issue of learning-rate tuning.} Stochastic versions of the Polyak step size come in many different flavors \citep{Berrada2019,Loizou2021,Gower2022}. However, most of the practical implementations cap the effective step size with a hyperparameter $\alpha$ as depicted in equation~\eqref{eqn:sps}. This hyperparameter can be interpreted as a user-specified learning rate. Versions of \SPS{}, combinable with weight decay, momentum, and preconditioning, have shown promising empirical performance across many deep learning tasks, often being much less sensitive to the choice of $\alpha$ \citep{Schaipp2023,Schaipp2024a}. \SPS{} usually performs well across multiple orders of magnitude for $\alpha$. In this paper, we provide a theoretical analysis of this remarkable stability property of \SPS{}.\footnote{In \cref{sec:app:related-work}, we comment on why previous analyses such as the one by \citet{Loizou2021} do not fully reflect the empirical results.}

Similar to the Polyak step size, the non-negativity of loss functions in machine learning has been leveraged to derive Gauss-Newton type step sizes \citep{Orvieto2024,Islamov2025}. The resulting method \NGN{} has been proven to be similarly robust for deep learning tasks as Polyak-based methods \citep{Islamov2025}.

In this paper, we will focus mainly on \SPS{} and \NGN{}. However, we want to briefly summarize other efforts to design optimization algorithms that need less (or no) tuning of the step size (these are sometimes subsumed under the name ``parameter-free algorithms").
One line of work sets the learning rate on-the-fly by estimating a-priori unknown problem parameters based on the optimization trajectory \citep{Carmon2022,Defazio2023,Ivgi2023,Mishchenko2024}. Other parameter-free methods based on coin betting entirely remove the learning rate as hyperparameter \citep{Orabona2016,Orabona2017}. Recently, \citet{Kasimbeg2025} have conducted an empirical comparison of several of the methods mentioned above.

\paragraph{Stochastic proximal point.} For deterministic problems, it is well known that the proximal point method \citep{Martinet1970,Rockafellar1976a} converges for any sequence of step size, and potentially arbitrarily fast.\footnote{The caveat: a faster rate comes with a larger step size, which makes the subproblems in general harder to solve.}
In the stochastic case, \SPP{} has been analyzed in various settings \citep{Bertsekas2011,Boyd2014,Patrascu2017,Asi2019,Davis2019,Milzarek2024}. However, to the best of our knowledge, no clear theoretical advantage of \SPP{} has been established in the stochastic setting. Here, we prove an improvement in step-size stability for \SPP{}, and show how it is closely related to interpolation properties.

\begin{figure}[t]
    \begin{subfigure}{0.49\columnwidth}
       \centering
        \includegraphics[width=0.99\linewidth]{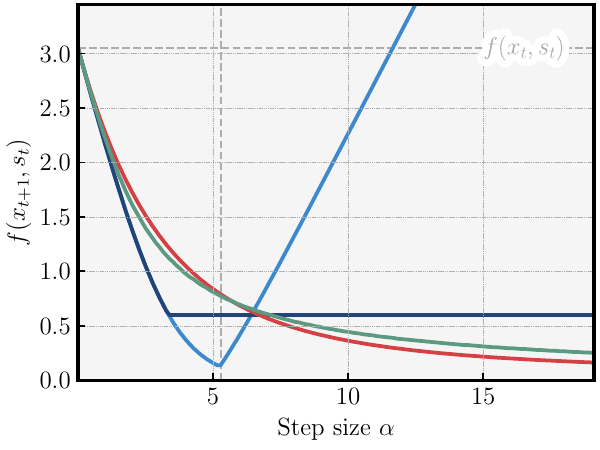}
    \end{subfigure}
    \begin{subfigure}{0.49\columnwidth}
       \centering
        \includegraphics[width=0.99\linewidth]{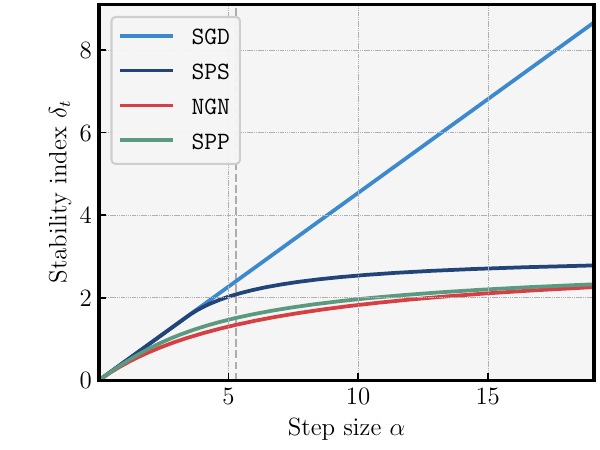}
    \end{subfigure}
    \caption{Illustration of theory for $f(x, s_t) = \ln(1+\exp(-x)) + \max\{x-2, 0\}$ and $x_t=-3$. \textbf{(Left)} Next-iterate loss as a function of step size $\alpha$. \textbf{(Right)} Stability index $\delta_t$ as a function of $\alpha$. Vertical line marks best \SGD{} step size. For large $\alpha$, stable loss values coincide with benign ($\approx$ sub-linear) increase of $\delta_t$.}
    \label{fig:one}
\end{figure}

\section{Theoretical Analysis}\label{sec:theory}
We study model-based stochastic proximal point with step sizes $(\alpha_t)_{t\in \N} \subset \R_{> 0}$. Let $y\mapsto f_x(y,s)$ be a  model of $f(\cdot,s)$ around $x\in \R^d$, then the update is given by
\begin{align}\label{eqn:update}
    x_{t+1} = \argmin_{y \in \R^d} f_{x_t}(y,s_t) + \frac{1}{2\alpha_t}\|y-x_t\|^2.
\end{align}
We define $\delta_t$, called from now on the \textbf{stability index}, as
\begin{align}\label{eqn:def-delta}
\hspace{-2ex}\boxed{
    \delta_t := f(x_t,s_t) - f_{x_t}(x_{t+1},s_t) - \frac{1}{2\alpha_t}\|x_{t+1}-x_t\|^2.
} 
\end{align}
We will later show that $\delta_t$, more precisely its scaling with respect to $\alpha_t$, determines how stable the model-based update~\eqref{eqn:update} is to the choice of the step size $\alpha_t$. \cref{fig:one} illustrates this phenomenon on a toy example. Before we derive  $\delta_t$ for concrete choices of $f_x(\cdot,s)$ (see \cref{sec:models}), we start by presenting the role of $\delta_t$ in suboptimality (resp.\ stationarity) bounds for the convex (resp.\ weakly convex) case. 

\subsection{Convex Case}\label{sec:theory-convex}
In this sub-section, we make the following model assumptions: for every $s\in \mathcal{S}$, it holds:
\begin{enumerate}[label=(A\arabic*)]
    \item\label{item:A1} The mapping $y\mapsto f_x(y,s)$ is convex.
    \item\label{item:A2} We have $f_x(x,s) = f(x,s)$ for all $x\in \R^d$ and $f_x(y,s) \leq f(y,s)$ for all $y\in \R^d$.
\end{enumerate}
We will show later that if $f(\cdot,s)$ is convex, for many methods considered here, the above two assumptions are satisfied (see \cref{sec:models}).
Further, from the definitions \eqref{eqn:def-delta} and \eqref{eqn:update}, it follows immediately that the stability index is always non-negative under \ref{item:A2}.
\begin{lemma}\label[lemma]{lem:delta-nonneg}
    If the model satisfies $f_x(x,s) = f(x,s)$, in particular if \ref{item:A2} holds, then $\delta_t \geq 0$ holds for any $x_t \in \R^d,s_t \in \mathcal{S}$ and any $\alpha_t>0$.
\end{lemma}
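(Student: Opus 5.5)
The plan is to use only the minimizing property of $x_{t+1}$ in the update \eqref{eqn:update}, comparing its objective value with that of the candidate point $y = x_t$. Define $\Phi_t(y) := f_{x_t}(y,s_t) + \frac{1}{2\alpha_t}\|y-x_t\|^2$. Since $x_{t+1}$ is a minimizer of $\Phi_t$ over $\R^d$, we have $\Phi_t(x_{t+1}) \le \Phi_t(x_t)$.

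Next, evaluate the right-hand side. The quadratic term vanishes at $y = x_t$, and the hypothesis $f_x(x,s) = f(x,s)$ applied with $x = x_t$, $s = s_t$ gives
\begin{align*}
\Phi_t(x_t) = f_{x_t}(x_t,s_t) = f(x_t,s_t).
\end{align*}
Combining this with the minimizing inequality yields $f_{x_t}(x_{t+1},s_t) + \frac{1}{2\alpha_t}\|x_{t+1}-x_t\|^2 \le f(x_t,s_t)$. Rearranged, this is exactly $\delta_t \ge 0$ by the definition \eqref{eqn:def-delta}.

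This holds for arbitrary $x_t$, $s_t$ and $\alpha_t > 0$. Convexity \ref{item:A1} is not needed, and neither is the lower-bound half of \ref{item:A2}; only the identity $f_x(x,s) = f(x,s)$ is used, which \ref{item:A2} contains.

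The only real subtlety is that the argument assumes $x_{t+1}$ is a well-defined minimizer of \eqref{eqn:update}, which the paper's definition of the update implicitly does. If the argmin were a set, the same argument applies to any selected element of it. I do not expect any step to be a genuine obstacle.
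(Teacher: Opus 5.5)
Your proof is correct and matches the paper's approach. The paper gives no separate proof and only says nonnegativity ``follows immediately'' from \eqref{eqn:def-delta} and \eqref{eqn:update}; that is exactly your comparison of the update objective at $x_{t+1}$ with its value $f_{x_t}(x_t,s_t)=f(x_t,s_t)$ at the candidate $y=x_t$, and your observation that only this identity and the existence of the minimizer are needed is accurate.
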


Note that the model-based framework implicitly anchors the step size of all methods onto the same scale, therefore making them comparable: this is due to update \eqref{eqn:update}, and the first part of \ref{item:A2}.

Next, we state the base lemma for our theoretical analysis.
\begin{lemma}\label[lemma]{lem:base}
Let $1\leq k\leq T$ and let $u\in \R^d$ be measurable with respect to $x_k$. Let the iterates $(x_t)_{t\in \N}$ be generated by \eqref{eqn:update}. Assume that \ref{item:A1}-\ref{item:A2} hold.
Denoting $D_t := \|x_t-u\|^2$, we have
    \begin{align*}
        \sum_{t=k}^T \alpha_t \E[f(x_t) - f(u)] \leq
        \frac12\E[D_k-D_{T+1}]
        + \sum_{t=k}^T \alpha_t \E[\delta_t].
    \end{align*}
\end{lemma}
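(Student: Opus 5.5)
The plan is the standard one-step inequality for proximal updates, driven by the strong convexity of the proximal subproblem, and then a telescoping sum. Fix $t\geq k$. By \ref{item:A1}, the objective $\phi_t(y) := f_{x_t}(y,s_t) + \frac{1}{2\alpha_t}\|y-x_t\|^2$ is $\frac{1}{\alpha_t}$-strongly convex. Since $x_{t+1}$ is its minimizer, the three-point (quadratic growth) inequality gives, for any $u$,
\begin{align*}
    f_{x_t}(x_{t+1},s_t) + \frac{1}{2\alpha_t}\|x_{t+1}-x_t\|^2 + \frac{1}{2\alpha_t}\|u-x_{t+1}\|^2 \leq f_{x_t}(u,s_t) + \frac{1}{2\alpha_t}\|u-x_t\|^2 .
\end{align*}
This holds deterministically for every $u$, including random $u$.

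Next I rewrite the left-hand side through the definition \eqref{eqn:def-delta}. The first two terms there equal $f(x_t,s_t) - \delta_t$. On the right-hand side I use the lower-bound part of \ref{item:A2}, namely $f_{x_t}(u,s_t)\leq f(u,s_t)$. Multiplying by $\alpha_t$ and rearranging gives
\begin{align*}
    \alpha_t\bigl(f(x_t,s_t) - f(u,s_t)\bigr) \leq \tfrac12 (D_t - D_{t+1}) + \alpha_t \delta_t .
\end{align*}
Summing over $t=k,\dots,T$ telescopes the distance terms to $\frac12(D_k - D_{T+1})$.

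Finally I take expectations, and this is the one step that needs care. I need $\E[f(x_t,s_t) - f(u,s_t)] = \E[f(x_t) - f(u)]$. This follows by conditioning on $\mathcal{F}_t$, the sigma-algebra generated by $s_0,\dots,s_{t-1}$.
\begin{itemize}
    \item For $t\geq k$, both $x_t$ and $u$ are $\mathcal{F}_t$-measurable; here I use that $u$ is measurable with respect to $x_k$, and $x_k$ is $\mathcal{F}_k\subseteq\mathcal{F}_t$-measurable.
    \item The sample $s_t$ is independent of $\mathcal{F}_t$.
    \item Hence $\E[f(x_t,s_t)-f(u,s_t)\mid\mathcal{F}_t] = f(x_t)-f(u)$.
\end{itemize}
This is precisely why the sum starts at $k$ and why $u$ may depend on $x_k$ but not on later iterates. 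The $\delta_t$ and distance terms are simply kept inside the expectation. I expect no real obstacle beyond stating this measurability and independence assumption on the sampling explicitly, together with integrability of the quantities involved.
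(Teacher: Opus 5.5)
Your proposal is correct and follows the paper's proof essentially step for step. The three-point inequality you derive from the $\frac{1}{\alpha_t}$-strong convexity of the proximal subproblem is exactly \citet[Lem.~3.7]{Asi2019}, which the paper cites, and your rewriting via \eqref{eqn:def-delta} together with $f_{x_t}(u,s_t)\leq f(u,s_t)$ from \ref{item:A2} is the paper's $U_1/U_2$ decomposition, followed by the same telescoping and conditional-expectation step. Your explicit measurability argument (that $s_t$ is independent of $(x_t,u)$ for $t\geq k$) simply spells out what the paper compresses into ``apply conditional expectation.''
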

\begin{proof}
    Under \ref{item:A1}, \citet[Lem.\ 3.7]{Asi2019} show
    \begin{align*}
        \frac12 \|x_{t+1} - u\|^2 \leq \frac12 \|x_t-u\|^2   -
        \frac12 \|x_{t+1}-x_t\|^2 \\
        - \alpha_t[\textcolor{OtherBlue}{f_{x_t}(x_{t+1},s_t) - f_{x_t}(u,s_t)}].
    \end{align*}
    Now, we decompose
    \begin{align*}
        &\hspace{-2ex} \textcolor{OtherBlue}{f_{x_t}(x_{t+1},s_t) - f_{x_t}(u,s_t)} =  \\
        &\underbracket{f(x_t,s_t) - f_{x_t}(u,s_t)}_{=:~U_1} + \underbracket{f_{x_t}(x_{t+1},s_t) - f(x_t,s_t)}_{=:~U_2}.
    \end{align*}
    Due to \ref{item:A2} we have $U_1 \geq f(x_t,s_t) - f(u,s_t)$.
    For the other term, use \eqref{eqn:def-delta} to obtain
    \begin{align*}
        -\alpha_t U_2 - \frac12\|x_{t+1}-x_t\|^2 = \alpha_t \delta_t. 
    \end{align*}
    Altogether, we get
    \begin{align}\label{eqn:single-step-base-ineq}
        \frac12 (D_{t+1}-D_t) \leq - \alpha_t[f(x_{t},s_t) - f(u,s_t)] + \alpha_t \delta_t.
    \end{align}
    Then apply conditional expectation, and sum over $t=k,\ldots,T$.
\end{proof}
\begin{theorem}[Average-iterate bound]\label{thm:avg-iterate}
    Assume that \ref{item:A1}-\ref{item:A2} hold. Let $T\in \N$ and let the iterates $(x_t)_{t\in \N}$ be generated by \eqref{eqn:update}. Let $x_\star \in \R^d$ and $D:=\|x_1-x_\star\|$.
    Then, it holds, for both $\bar f_T = \E[f((\sum_{t=1}^T \alpha_t)^{-1} \sum_{t=1}^T \alpha_t x_t)]$ and $\bar f_T = \min_{t=1,\ldots,T} \E[f(x_t)]$ that
    \begin{align*}
        \bar f_T - f(x_\star) \leq \frac{D^2}{2\sum_{t=1}^T \alpha_t} + \frac{\sum_{t=1}^T \alpha_t \E[\delta_t]}{\sum_{t=1}^T \alpha_t}.
    \end{align*}
\end{theorem}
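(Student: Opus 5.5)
We apply \cref{lem:base} with $k=1$ and $u=x_\star$. Since $x_\star$ is deterministic, it is trivially measurable with respect to $x_1$. The lemma then gives
\begin{align*}
\sum_{t=1}^T \alpha_t \E[f(x_t) - f(x_\star)] \leq \frac12\E[D_1 - D_{T+1}] + \sum_{t=1}^T \alpha_t \E[\delta_t].
\end{align*}
Because $D_{T+1}=\|x_{T+1}-x_\star\|^2\geq 0$, we may drop it. Because $x_1$ is the (deterministic) initial point, $\E[D_1]=D^2$. We then divide both sides by $\sum_{t=1}^T\alpha_t>0$. This yields
\begin{align*}
\frac{\sum_{t=1}^T \alpha_t \E[f(x_t)]}{\sum_{t=1}^T \alpha_t} - f(x_\star) \leq \frac{D^2}{2\sum_{t=1}^T \alpha_t} + \frac{\sum_{t=1}^T \alpha_t \E[\delta_t]}{\sum_{t=1}^T \alpha_t}.
\end{align*}
It remains to show that each of the two choices of $\bar f_T$ is bounded by the weighted average $\sum_t \alpha_t\E[f(x_t)]/\sum_t\alpha_t$ on the left.

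For $\bar f_T=\min_t \E[f(x_t)]$, the bound is immediate: a minimum is at most any convex combination, and the weights $\alpha_t/\sum_s\alpha_s$ are nonnegative and sum to one.

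For the averaged iterate, we use convexity of $f$. Under \ref{item:A1}--\ref{item:A2} the model is a convex minorant touching $f(\cdot,s)$ at every point. Hence $f(\cdot,s)$ is the pointwise supremum of convex functions $y\mapsto f_x(y,s)$ over $x$, so $f(\cdot,s)$ is convex, and therefore $f=\E_s[f(\cdot,s)]$ is convex. Jensen's inequality, applied pathwise, gives $f\big(\sum_t \tfrac{\alpha_t}{\sum_s\alpha_s} x_t\big)\leq \sum_t \tfrac{\alpha_t}{\sum_s\alpha_s} f(x_t)$. Taking expectations then gives the claim.

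The only delicate point is justifying convexity of $f$ from the assumptions, via the supremum-of-minorants argument above. If one prefers, convexity of $f(\cdot,s)$ can simply be taken as the standing assumption of this subsection. Everything else is direct bookkeeping from \cref{lem:base}.
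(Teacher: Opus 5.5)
Your proof is correct and follows the paper's argument: you apply \cref{lem:base} with $k=1$ and $u=x_\star$, drop $D_{T+1}\geq 0$, divide by $\sum_{t=1}^T\alpha_t$, and finish either by comparing the minimum with the weighted average or by Jensen's inequality. Your explicit justification that $f$ is convex, as the pointwise supremum over $x$ of the convex minorants $f_x(\cdot,s)$ given by \ref{item:A1}--\ref{item:A2}, is valid and fills in a step the paper leaves implicit.
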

\begin{proof}
    Apply \cref{lem:base} with $k\to 1$ and $u\to x_\star$. Then, divide by $\sum_{t=1}^T \alpha_t$ (and apply Jensen's inequality).
\end{proof}
\begin{theorem}[Last-iterate bound]\label{thm:last-iterate}
    Assume that \ref{item:A1}-\ref{item:A2} hold. Let $T\in \N$ and let the iterates $(x_t)_{t\in \N}$ be generated by \eqref{eqn:update}. Let $x_\star \in \R^d$ and $D:=\|x_1-x_\star\|$. Then, it holds
    \begin{align*}
        &\E[f(x_T) - f(x_\star)] \leq \frac{D^2}{2\sum_{t=1}^T \alpha_t} + \frac{ \sum_{t=1}^T \alpha_t\E[\delta_t]}{\sum_{t=1}^T \alpha_t} \\
        &\hspace{10ex} + \sum_{k=1}^{T-1} \frac{\alpha_k}{\sum_{t=k+1}^{T}\alpha_t} \Big(\frac{1}{\sum_{t=k}^{T}\alpha_t} \sum_{t=k}^T \alpha_t \E[\delta_t]\Big).
    \end{align*}
\end{theorem}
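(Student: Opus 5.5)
This follows the standard last-iterate technique of Orabona/Shamir–Zhang: compare the tail averages of the function values and telescope. The key tool is \cref{lem:base} applied with a point $u$ that depends only on the past. Throughout, write $S_k := \sum_{t=k}^T \alpha_t$, $F_t := \E[f(x_t)]$, and $Q_k := \frac{1}{S_k}\sum_{t=k}^T \alpha_t F_t$ for the weighted tail average. Then $Q_T = F_T$ and $Q_1 - f(x_\star)$ is exactly the quantity controlled by \cref{thm:avg-iterate}.

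\textbf{Step 1 (one-step recursion for tail averages).} Apply \cref{lem:base} with the index $k$ and $u = x_k$, which is measurable with respect to $x_k$. Since $D_k = 0$ and $D_{T+1}\ge 0$, this gives
\begin{align*}
\sum_{t=k}^T \alpha_t (F_t - F_k) \leq \sum_{t=k}^T \alpha_t \E[\delta_t].
\end{align*}
The left side equals $\sum_{t=k+1}^T \alpha_t (F_t - F_k)$ because the $t=k$ term vanishes. Now use the identity
\begin{align*}
S_k Q_k = \alpha_k F_k + S_{k+1} Q_{k+1}.
\end{align*}
It lets us write $\sum_{t=k+1}^T\alpha_t(F_t-F_k) = S_{k+1}(Q_{k+1}-F_k)$ and also $F_k = (S_kQ_k - S_{k+1}Q_{k+1})/\alpha_k$. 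Eliminating $F_k$ is a short algebraic step:
\begin{align*}
S_{k+1}Q_{k+1} - S_{k+1}F_k = \frac{S_{k+1}S_k}{\alpha_k}(Q_{k+1}-Q_k).
\end{align*}
Hence
\begin{align*}
Q_{k+1} - Q_k \leq \frac{\alpha_k}{S_{k+1}}\cdot\frac{1}{S_k}\sum_{t=k}^T \alpha_t\E[\delta_t].
\end{align*}

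\textbf{Step 2 (telescoping).} Sum the recursion over $k=1,\dots,T-1$ to get
\begin{align*}
F_T = Q_T \le Q_1 + \sum_{k=1}^{T-1}\frac{\alpha_k}{S_{k+1}}\Big(\frac{1}{S_k}\sum_{t=k}^T\alpha_t\E[\delta_t]\Big).
\end{align*}
Subtract $f(x_\star)$ and bound $Q_1 - f(x_\star)$ by \cref{lem:base} with $k=1$, $u=x_\star$, dividing by $S_1$. This is exactly the average bound of \cref{thm:avg-iterate}, without Jensen. The result is precisely the claimed inequality.

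\textbf{Main obstacle.} The only delicate points are the algebra in Step 1, namely getting the factor $\alpha_k/(S_{k+1}S_k)$ right, and the legitimacy of choosing $u = x_k$. The latter needs \cref{lem:base} to allow $u$ to be random but determined by $x_k$. That is assumed in its statement: the conditional-expectation step only uses $t\ge k$, where $s_t$ is independent of $u$, so the choice is valid.
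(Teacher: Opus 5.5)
Your proposal is correct and is essentially the paper's argument: the paper defers to the last-iterate proof of Defazio et al.\ (Thm.~10). That proof applies \cref{lem:base} with $u=x_k$ (so $D_k=0$), feeds it into the tail-average identity you derive in Step~1, and bounds the full weighted average via \cref{lem:base} with $k=1$, $u=x_\star$, exactly as you do. Your algebra for the factor $\alpha_k/(S_{k+1}S_k)$ checks out, and your justification for choosing $u=x_k$ (independence of $s_t$ from $x_k$ for $t\ge k$) is sound.
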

\begin{proof}
    Follows identically to the proof of \citet[Thm.\ 10]{Defazio2023a} (or \citet[Thm.\ 3.1]{Schaipp2025}), using \cref{lem:base} instead of \citet[Lem.\ E.1]{Schaipp2025}, and replacing $\frac12 \eta_t^2 \E\|g_t\|^2$ by $\alpha_t \E[\delta_t]$.
\end{proof}
Throughout, we will denote the right hand side of the bound in \cref{thm:avg-iterate,thm:last-iterate} as follows:
\begin{align*}
    \bar f_T - f(x_\star) \leq \Omega_T^{\text{avg}}, \quad \E[f(x_T) - f(x_\star)] \leq \Omega_T^{\text{last}}.
\end{align*}
\paragraph{Illustration of \cref{thm:last-iterate}.} Before deriving the value of~$\delta_t$ for the methods of interest, we illustrate how the bound $   \Omega_T^{\text{last}}$ in \cref{thm:last-iterate} (analogously \cref{thm:avg-iterate}) reflects the aspect of stability with respect to large step sizes $\alpha_t$ (see \cref{fig:illustration-nu}). For this, we fix a constant step size $\alpha_t = \alpha$. 

As discussed above, the scaling behavior in $\alpha$ depends crucially on the scaling of $\E[\delta_t]$ in $\alpha$. Purely for the sake of illustration, we now make the simplification $\Delta_t:=\E[\delta_t] = \alpha^{\nu}$ for some $\nu \geq 0$.
Denote $H_{T} := \sum_{s=1}^{T} \frac{1}{s}$. Then, with $\alpha_t = \textcolor{OtherBlue}{\alpha} $ and $\Delta_t= \alpha^{\nu}$, we have
\begin{align*}
    \Omega_T^{\text{last}} = \frac{D^2}{2\textcolor{OtherBlue}{\alpha}T} + {\textcolor{OtherBlue}{\alpha}}^\nu [1+H_{T-1}].
\end{align*}

\takeaway{The stability of the bound 
$\Omega_T$ 
with respect to large learning rates $\alpha$ crucially depends on the scaling of $\Delta_t = \E[\delta_t]$ (as illustrated with our simplification $\Delta_t= \alpha^{\nu}$, see \cref{fig:illustration-nu}).}

\begin{figure}
    \begin{subfigure}{0.49\columnwidth}
        \centering
        \includegraphics[width=0.95\linewidth]{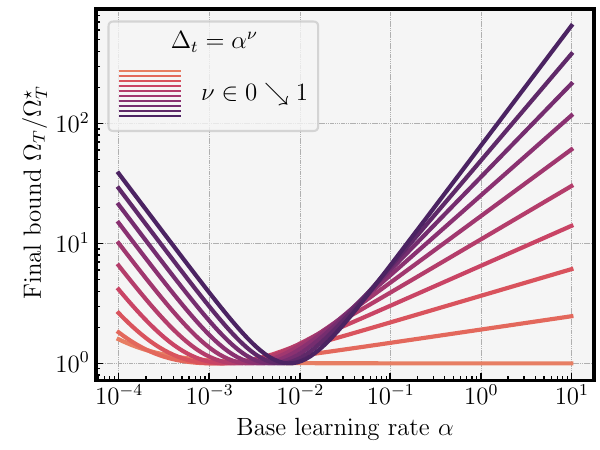}
        \caption{Illustration of \cref{thm:last-iterate}}
        \label{fig:illustration-nu}
    \end{subfigure}
    \begin{subfigure}{0.49\columnwidth}
    \centering
    \includegraphics[width=0.95\linewidth]{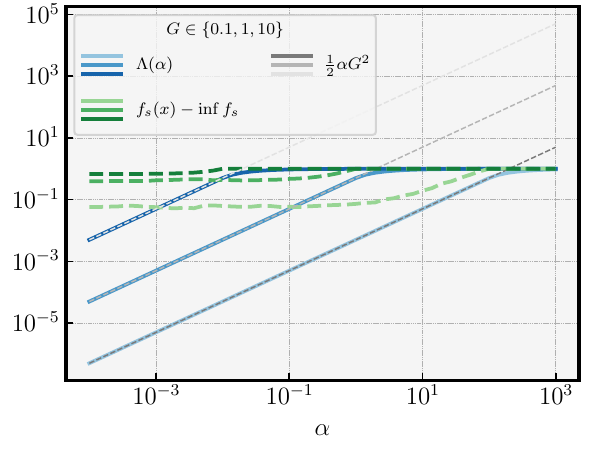}
    \caption{\texttt{PEPit} verification}
    \label{fig:spp-pepit-delta}
\end{subfigure}
\caption{\textbf{(Left)} Stability with respect to learning-rate $\alpha$ under different scalings of $\Delta_t$. \textbf{(Right)} Values of $\Lambda(\alpha)$ \textbf{(blue)} computed with \texttt{PEPit}, for different Lipschitz constants $G$. We plot the upper bound $f_s(x) -\inf f_s$ for the worst-case instance in \textbf{(green)}, and the corresponding $\delta^{\SGD{}} = \frac12 \alpha G^2$ \textbf{(grey)}.}
\end{figure}
\subsection{Weakly Convex Case}\label{sec:theory-weakly-convex}
This section shows that the role of $\E[\delta_t]$ is not limited to the convex case.
For this, we first define a function $\phi:\R^d\to \R$ to be $\rho$-weakly convex for $\rho \geq 0$ if $\phi + \frac{\rho}{2}\|\cdot\|^2$ is convex. In this case, for any subgradient $u \in \partial \phi(x)$ it holds $\phi(y) \geq \phi(x) + \iprod{u}{y-x} -\frac{\rho}{2}\|y-x\|^2$ for all $y\in \R^d$ \citep[Lem.\ 2.1]{Davis2019}.
If $\phi$ is $\rho$-weakly convex, its \emph{Moreau envelope} \citep{Moreau1965} is given by
\begin{align*}
    \env{\phi}^\alpha (x) := \min_{y\in \R^d} \phi(y) + \frac{1}{2\alpha}\|y-x\|^2, \quad \alpha < 1/\rho.
\end{align*}
The Moreau envelope is differentiable, and $\|\nabla \env{\phi}^\alpha (x)\|$ can be interpreted as a measure of near-stationarity of $\phi$ at $x$ (see \citet[Section 2.2]{Davis2019}).

We replace \ref{item:A1} and \ref{item:A2} with their natural generalization:
\begin{enumerate}[label=(\~{A}\arabic*)]
    \item\label{item:tA1} For any $x\in \R^d$ and $s\in \mathcal{S}$, the mapping $y\mapsto f_x(y,s)$ is $\lambda$-weakly convex for some $\lambda\geq 0$.
    \item\label{item:tA2} We have $f_x(x,s) = f(x,s)$ for all $x\in \R^d$. Further, for all $s\in \mathcal{S}$ there exists $\rho_s \geq 0$ such that for all $x,y\in \R^d$ we have $f_x(y,s) \leq f(y,s) + \frac{\rho_s}{2}\|x-y\|^2$. Assume that $\rho := \E_s[\rho_s] < +\infty $.
\end{enumerate}
Note that if \ref{item:tA1} and \ref{item:tA2} are satisfied with $\lambda=\rho_s=0$, then \ref{item:A1} and \ref{item:A2} are satisfied.
Further, \ref{item:tA1}-\ref{item:tA2} imply that $f=\E[f(\cdot,s)]$ itself is $(\rho+\lambda)$-weakly convex \citep[Lem.\ 4.1]{Davis2019}.

Below, we state analogous results to \cref{lem:base,thm:avg-iterate} for the weakly convex case. Proofs are deferred to \cref{sec:app:proofs-weakly-convex} in the Appendix.

\begin{lemma}\label[lemma]{lem:weakly-convex-base}
    Assume that \ref{item:tA1} and \ref{item:tA2} hold. Let $\alpha_t \in (0, 1/\lambda)$. Let $x_t\in \R^d$ and denote by $\E_t$ the conditional expectation with respect to the filtration generated by $\{x_1,\ldots,x_t\}$. Let $\bar{\rho} > \rho + \lambda$ and let $\hat{x}_t = \prox{(1/\bar{\rho})f}(x_t)$. Then, we have
    \begin{align}\label{eqn:base-ineq-weakly-convex}
    \begin{split}
        &\hspace{-4ex}\E_t\|x_{t+1} - \hat{x}_t\|^2 \leq \|x_{t} - \hat{x}_t\|^2 \\
        &\hspace{0ex}- \frac{\alpha_t(\bar{\rho}-\rho-\lambda)}{1-\alpha_t\lambda}\|x_{t} - \hat{x}_t\|^2
        + \frac{2\alpha_t}{1-\alpha_t\lambda} \E_t[\delta_t].
    \end{split}
    \end{align}
\end{lemma}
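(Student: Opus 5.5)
The plan is a one-step argument: compare $x_{t+1}$ with the fixed anchor $\hat x_t$ using strong convexity of the proximal subproblem in \eqref{eqn:update}, then use the definition \eqref{eqn:def-delta} of $\delta_t$ and the defining property of $\hat x_t$. Throughout, abbreviate $\alpha=\alpha_t$, $s=s_t$ and $D:=\|x_t-\hat x_t\|^2$. Note that $\hat x_t$ is well defined and $\sigma(x_t)$-measurable, because $f$ is $(\rho+\lambda)$-weakly convex and $\bar\rho>\rho+\lambda$.

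\emph{Step 1 (three-point inequality).} By \ref{item:tA1} and $\alpha<1/\lambda$, the map $y\mapsto f_{x_t}(y,s)+\frac{1}{2\alpha}\|y-x_t\|^2$ is $(\frac1\alpha-\lambda)$-strongly convex, and $x_{t+1}$ is its minimizer. Hence, for every $u$,
\begin{align*}
f_{x_t}(x_{t+1},s)+\tfrac{1}{2\alpha}\|x_{t+1}-x_t\|^2+\tfrac{1-\alpha\lambda}{2\alpha}\|u-x_{t+1}\|^2\leq f_{x_t}(u,s)+\tfrac{1}{2\alpha}\|u-x_t\|^2 .
\end{align*}
Take $u=\hat x_t$. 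By \eqref{eqn:def-delta}, the first two terms on the left equal $f(x_t,s)-\delta_t$. By \ref{item:tA2}, $f_{x_t}(\hat x_t,s)\le f(\hat x_t,s)+\frac{\rho_s}{2}D$. This yields
\begin{align*}
\tfrac{1-\alpha\lambda}{2\alpha}\|x_{t+1}-\hat x_t\|^2\le \tfrac{1}{2\alpha}D + f(\hat x_t,s)-f(x_t,s)+\tfrac{\rho_s}{2}D+\delta_t .
\end{align*}

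\emph{Step 2 (conditional expectation and the prox property).} Apply $\E_t$. Since $x_t$ and $\hat x_t$ are fixed given the filtration, the right-hand side becomes
\begin{align*}
\tfrac{1}{2\alpha}D+f(\hat x_t)-f(x_t)+\tfrac{\rho}{2}D+\E_t[\delta_t].
\end{align*}
The key bound is on $f(\hat x_t)-f(x_t)$. The function $y\mapsto f(y)+\frac{\bar\rho}{2}\|y-x_t\|^2$ is $(\bar\rho-\rho-\lambda)$-strongly convex and is minimized at $\hat x_t$. Comparing its value at $y=x_t$ with its value at the minimizer gives
\begin{align*}
f(\hat x_t)+\tfrac{\bar\rho}{2}D+\tfrac{\bar\rho-\rho-\lambda}{2}D\le f(x_t),
\end{align*}
so $f(\hat x_t)-f(x_t)+\frac\rho2 D\le -\frac{2\bar\rho-2\rho-\lambda}{2}D\le-(\bar\rho-\rho-\lambda)D$. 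The last inequality holds because $2\bar\rho-2\rho-\lambda\ge 2(\bar\rho-\rho-\lambda)$.

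\emph{Step 3 (rearrangement).} Multiply by $\frac{2\alpha}{1-\alpha\lambda}$. The coefficient of $D$ becomes $\frac{1-2\alpha(\bar\rho-\rho-\lambda)}{1-\alpha\lambda}$, which I will rewrite as $1-\frac{2\alpha(\bar\rho-\rho-\lambda)-\alpha\lambda}{1-\alpha\lambda}$. To reach the stated coefficient $1-\frac{\alpha(\bar\rho-\rho-\lambda)}{1-\alpha\lambda}$, it helps to keep the sharper bound $-\frac{2\bar\rho-2\rho-\lambda}{2}D$ from Step 2. With it, the coefficient is exactly
\begin{align*}
\frac{1-\alpha(2\bar\rho-2\rho-\lambda)}{1-\alpha\lambda}=1-\frac{2\alpha(\bar\rho-\rho-\lambda)}{1-\alpha\lambda}\le 1-\frac{\alpha(\bar\rho-\rho-\lambda)}{1-\alpha\lambda}.
\end{align*}
This is at least as strong as \eqref{eqn:base-ineq-weakly-convex}, and the $\delta_t$ term becomes $\frac{2\alpha}{1-\alpha\lambda}\E_t[\delta_t]$, as claimed.

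The main obstacle is bookkeeping rather than ideas. Two points need care: the strong-convexity constants, namely $\frac1\alpha-\lambda$ for the model subproblem and $\bar\rho-\rho-\lambda$ for the Moreau subproblem, where the latter uses the $(\rho+\lambda)$-weak convexity of $f$ cited from Davis et al.; and applying the condition $\E_s[\rho_s]=\rho$ only after conditioning, with $\hat x_t$ held fixed.
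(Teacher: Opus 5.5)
Your proof is correct and follows the paper's route: the three-point inequality from the $(\frac{1}{\alpha_t}-\lambda)$-strong convexity of the model subproblem at $u=\hat x_t$, then \ref{item:tA2}, the definition of $\delta_t$, conditional expectation with $\hat x_t$ held fixed, and finally the prox property of $\hat x_t$. The only difference is in that last step. The paper uses the plain minimizer comparison $f(x_t)-f(\hat x_t)\ge\frac{\bar\rho}{2}\|x_t-\hat x_t\|^2$, which already gives exactly the stated coefficient $\frac{1-\alpha_t(\bar\rho-\rho)}{1-\alpha_t\lambda}=1-\frac{\alpha_t(\bar\rho-\rho-\lambda)}{1-\alpha_t\lambda}$. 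Your strong-convexity sharpening is therefore valid but unnecessary. Your intermediate weakening to $-(\bar\rho-\rho-\lambda)D$ is a dead end that should be dropped, since by itself it would only yield the claim when $\bar\rho\ge\rho+2\lambda$.
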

\begin{theorem}\label{thm:weakly-convex}
    Assume that \ref{item:tA1} and \ref{item:tA2} hold, which in particular implies that $f$ is $(\rho+\lambda)$-weakly convex. Let $1\leq k \leq T$ and let the iterates $(x_t)_{t\in \N}$ be generated by \eqref{eqn:update}. Let $\alpha_t \in (0, 1/\bar{\rho})$ for all $t\in \N$ for some $\bar{\rho} > \rho + \lambda$. Denoting $\mathcal{E}_t:= \E[\env{f}^{1/\bar{\rho}}(x_t)]$, it holds
    \begin{align*}
        &\sum_{t=k}^{T} \frac{\alpha_t}{1-\alpha_t\lambda} \E\|\nabla \env{f}^{1/\bar{\rho}}(x_t)\|^2 \leq \frac{2\bar{\rho}}{\bar{\rho}-\rho-\lambda} \big[\mathcal{E}_k - \mathcal{E}_{T+1}\big] \\
        &\hspace{5ex}+ \frac{2\bar{\rho}^2}{\bar{\rho}-\rho-\lambda}\sum_{t=k}^{T}\big(\frac{\alpha_t}{1-\alpha_t \lambda} \E[\delta_t]\big).
    \end{align*}
\end{theorem}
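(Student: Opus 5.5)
The plan is the standard Moreau-envelope descent argument of \citet{Davis2019}, feeding in \cref{lem:weakly-convex-base} as the one-step inequality. Write $\phi := \env{f}^{1/\bar{\rho}}$ and $\hat{x}_t = \prox{(1/\bar{\rho})f}(x_t)$. Since $f$ is $(\rho+\lambda)$-weakly convex and $\bar{\rho} > \rho+\lambda$, the proximal subproblem is strongly convex and $\hat{x}_t$ is well defined. We also have the identity $\nabla \phi(x_t) = \bar{\rho}(x_t - \hat{x}_t)$, so $\|x_t-\hat{x}_t\|^2 = \bar{\rho}^{-2}\|\nabla\phi(x_t)\|^2$. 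Note that $\alpha_t < 1/\bar{\rho} < 1/\lambda$, so the lemma applies.

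\textbf{Step 1 (envelope descent).} Because $\hat{x}_t$ is feasible in the minimization defining $\phi(x_{t+1})$,
\begin{align*}
\phi(x_{t+1}) \leq f(\hat{x}_t) + \frac{\bar{\rho}}{2}\|x_{t+1}-\hat{x}_t\|^2 .
\end{align*}
Take $\E_t$ and insert \eqref{eqn:base-ineq-weakly-convex}. Using $f(\hat{x}_t) + \frac{\bar{\rho}}{2}\|x_t-\hat{x}_t\|^2 = \phi(x_t)$, this gives
\begin{align*}
\E_t[\phi(x_{t+1})] \leq \phi(x_t) - \frac{\bar{\rho}}{2}\cdot\frac{\alpha_t(\bar{\rho}-\rho-\lambda)}{1-\alpha_t\lambda}\|x_t-\hat{x}_t\|^2 + \frac{\bar{\rho}\,\alpha_t}{1-\alpha_t\lambda}\E_t[\delta_t].
\end{align*}

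\textbf{Step 2 (convert to gradient norm and rearrange).} Substitute $\|x_t-\hat{x}_t\|^2 = \bar{\rho}^{-2}\|\nabla\phi(x_t)\|^2$. The descent term then becomes
\begin{align*}
\frac{\bar{\rho}-\rho-\lambda}{2\bar{\rho}}\cdot\frac{\alpha_t}{1-\alpha_t\lambda}\|\nabla \phi(x_t)\|^2 .
\end{align*}
Multiply through by $\frac{2\bar{\rho}}{\bar{\rho}-\rho-\lambda}$. The $\delta_t$ term then carries the factor $\frac{2\bar{\rho}^2}{\bar{\rho}-\rho-\lambda}\cdot\frac{\alpha_t}{1-\alpha_t\lambda}$, which is exactly the constant in the statement.

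\textbf{Step 3 (sum).} Take full expectations via the tower property and sum over $t=k,\dots,T$. The $\mathcal{E}_t$ terms telescope to $\mathcal{E}_k-\mathcal{E}_{T+1}$, which yields the claimed inequality.

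All of the work lies in \cref{lem:weakly-convex-base}, which we are taking as given, so the theorem itself is bookkeeping. The one point that needs care is the facts about the Moreau envelope: that $\hat{x}_t$ is well defined, and the gradient formula for $\phi$ when $f$ is only weakly convex. Both hold because $1/\bar{\rho} < 1/(\rho+\lambda)$, and we cite \citet[Section 2.2]{Davis2019} for them. We should also make sure the measurability of $\hat{x}_t$ with respect to $x_t$ justifies using it inside $\E_t$; it does, since $\hat{x}_t$ is a deterministic function of $x_t$.
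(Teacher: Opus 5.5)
Your proposal is correct and follows the paper's proof essentially line for line. The paper also bounds $\env{f}^{1/\bar{\rho}}(x_{t+1})$ by evaluating the envelope objective at $\hat{x}_t$, inserts \cref{lem:weakly-convex-base}, converts $\|x_t-\hat{x}_t\|^2$ into $\bar{\rho}^{-2}\|\nabla \env{f}^{1/\bar{\rho}}(x_t)\|^2$ via \citet[Lem.\ 2.2]{Davis2019}, and then takes total expectations and telescopes; your constants and your observation that $\alpha_t<1/\bar{\rho}$ implies $\alpha_t<1/\lambda$ are both correct.
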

The above bound can be easily turned into a convergence rate for the near-stationarity measure $\E\|\nabla \env{f}^{1/\bar{\rho}}(\cdot)\|^2$, for example see \citep[Thm.\ 4.3]{Davis2019}. We can see that the role of $\E[\delta_t]$ in \cref{thm:weakly-convex} is analogous to the  previous results in \cref{sec:theory-convex}.
\section{Model Choices}\label{sec:models}
In this section, we derive the stability index \eqref{eqn:def-delta} for classical model choices.
Proofs are deferred to \cref{sec:app:proofs-deltas} in the Appendix. Let the \emph{regularized model} be given by
\begin{align}\label{eqn:psi-delta}
\Psi_t:=f_{x_t}(x_{t+1},s_t) + \frac{1}{2\alpha_t}\|x_{t+1}-x_t\|^2,
\end{align}
hence $\Psi_t= f(x_t,s_t) - \delta_t$.
We assume from now on that $f(\cdot,s)$ is $\eta_s$-weakly convex, that is, there exists $\eta_s \geq 0$ such that for all $x,y \in \R^d$ it holds
\begin{align}\label{eqn:weak-convexity}
    f(y,s) - f(x,s) \geq \iprod{g}{y-x} - \frac{\eta_s}{2} \|y-x\|^2
\end{align}
for all $g\in \partial f(x,s)$ and all $s\in \mathcal{S}$.

%
\subsection{Linear Model (\SGD{})} \label{sec:linearmodel}
If $f_x(y,s)=f(x,s) + \iprod{g}{y-x}$ for some $g\in \partial f(x,s)$, then $x_{t+1} = x_t -\alpha_t g_t$ and using this model in~\eqref{eqn:psi-delta} gives
$\Psi_t = f(x_t, s_t) - \frac{\alpha_t}{2}\|g_t\|^2.$
Therefore, we have $\delta_t^{\SGD{}} = \frac{\alpha_t}{2}\|g_t\|^2$ which recovers \citet[Thm.\ 3.1]{Schaipp2025}.
It is easy to see that \ref{item:tA1} holds with $\lambda=0$ and \ref{item:tA2} holds with $\rho_s=\eta_s$ due to~\eqref{eqn:weak-convexity}.
%
\subsection{Truncated Model (\SPS{})}\label{sec:spsmodel}
Given access to a lower bound on our stochastic function $C_s\leq \inf_z f(z,s)$ we can define a \emph{truncated model}
\begin{equation}\label{eq:truncated-model}
    f_x(y,s)=\max\{f(x,s) + \iprod{g}{y-x}, C_s\}
\end{equation}  
for some $g\in \partial f(x,s)$.
This model satisfies \ref{item:tA1} with $\lambda=0$ and \ref{item:tA2} with $\rho_s=\eta_s$ \citep[Lem.\ 2]{Schaipp2023}. 
This truncated model also gives a stability index that is always smaller or equal than the stability index of \SGD{}.

\begin{lemma}\label[lemma]{lem:delta-sps}
    Let  $s \mapsto C_s$ be a mapping such that $C_s \leq \inf_z f(z,s).$ Using the truncated model~\eqref{eq:truncated-model} in~\eqref{eqn:update} results in the \SPS{} (Stochastic Polyak Stepsize) method
    \begin{align}\label{eqn:def-tau}
        x_{t+1} = x_t - \tau_t g_t, \quad  \tau_t:= \min\{\alpha_t, \frac{f(x_t,s_t)-C_{t}}{\|g_t\|^2}\}.
    \end{align}
    Its stability index is $\delta^{\SPS{}}_t = \tau_t[1- \frac{\tau_t}{2\alpha_t}]\|g_t\|^2$. In particular, $\delta^{\SPS{}}_t \; \leq\; \delta^{\SGD{}}_t$ and
    \begin{align} \label{eq:deltaSPS-upper}
       \boxed{ \delta^{\SPS{}}_t \leq  \min\{\alpha_t\|g_t\|^2, f(x_t,s_t)-C_t \}. } 
    \end{align}
\end{lemma}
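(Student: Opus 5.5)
The plan is to first solve the proximal subproblem~\eqref{eqn:update} for the truncated model~\eqref{eq:truncated-model} explicitly, then substitute into the definition~\eqref{eqn:def-delta}, and finally compare the resulting expression with $\delta^{\SGD{}}_t=\frac{\alpha_t}{2}\|g_t\|^2$ and with the two terms in~\eqref{eq:deltaSPS-upper}.

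\textbf{Solving the subproblem.} The objective $y\mapsto \max\{f(x_t,s_t)+\iprod{g_t}{y-x_t}, C_t\} + \frac{1}{2\alpha_t}\|y-x_t\|^2$ is strongly convex. Any minimizer must lie on the line $x_t - \tau g_t$ with $\tau\ge 0$: projecting $y-x_t$ onto $\mathrm{span}\{g_t\}$ leaves the max-term unchanged and does not increase the quadratic. This reduces the problem to the scalar function $\tau\mapsto \max\{f(x_t,s_t)-\tau\|g_t\|^2, C_t\} + \frac{\tau^2}{2\alpha_t}\|g_t\|^2$. We split into two cases.
\begin{itemize}
\item If $\alpha_t\|g_t\|^2 \le f(x_t,s_t)-C_t$, the \SGD{} step $\tau=\alpha_t$ stays in the linear regime and is the minimizer.
\item Otherwise, the linear piece hits $C_t$ before $\tau=\alpha_t$. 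The minimizer is then the kink $\tau=(f(x_t,s_t)-C_t)/\|g_t\|^2$: to its left the derivative is $\|g_t\|^2(\tau/\alpha_t-1)<0$, and to its right it is positive.
\end{itemize}
Together the two cases give $\tau_t$ as in~\eqref{eqn:def-tau}. The case $g_t=0$ is trivial, with $x_{t+1}=x_t$ and $\delta_t=0$.

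\textbf{Computing $\delta_t$.} Because $\tau_t\le (f(x_t,s_t)-C_t)/\|g_t\|^2$, the maximum in $f_{x_t}(x_{t+1},s_t)$ is attained by the linear part. Hence $f_{x_t}(x_{t+1},s_t)=f(x_t,s_t)-\tau_t\|g_t\|^2$, and~\eqref{eqn:def-delta} yields
\[
\delta_t=\tau_t\|g_t\|^2-\frac{\tau_t^2}{2\alpha_t}\|g_t\|^2=\tau_t\Bigl[1-\frac{\tau_t}{2\alpha_t}\Bigr]\|g_t\|^2 .
\]

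\textbf{Comparison and the boxed bound.} The map $\tau\mapsto \tau(1-\tau/(2\alpha_t))$ is increasing on $[0,\alpha_t]$ and attains its maximum $\alpha_t/2$ at $\tau=\alpha_t$. Since $0\le\tau_t\le\alpha_t$, this gives $\delta^{\SPS{}}_t\le\frac{\alpha_t}{2}\|g_t\|^2=\delta^{\SGD{}}_t$. For~\eqref{eq:deltaSPS-upper}, drop the nonpositive term to get $\delta_t\le \tau_t\|g_t\|^2$. Then $\tau_t\le\alpha_t$ gives $\delta_t\le\alpha_t\|g_t\|^2$, and $\tau_t\le (f(x_t,s_t)-C_t)/\|g_t\|^2$ gives $\delta_t\le f(x_t,s_t)-C_t$.

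The only real obstacle is justifying that the minimizer is exactly $\tau_t$. Strong convexity makes it unique, and the one-dimensional reduction with the case split settles it.
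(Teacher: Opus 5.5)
Your proof is correct and follows the paper's argument. As in the paper, you substitute $x_{t+1}=x_t-\tau_t g_t$ into $\Psi_t$, read off $\delta^{\SPS{}}_t=\tau_t[1-\frac{\tau_t}{2\alpha_t}]\|g_t\|^2$, and compare with \SGD{} via the maximizer $\tau=\alpha_t$ of $\tau\mapsto\tau(1-\frac{\tau}{2\alpha_t})$; the only differences are that you derive the closed-form update yourself by the one-dimensional reduction (the paper cites a prior lemma for it) and that you get the boxed bound from $\delta_t\le\tau_t\|g_t\|^2$ plus the two upper bounds on $\tau_t$, where the paper splits into the cases $\tau_t=\alpha_t$ and $\tau_t=(f(x_t,s_t)-C_t)/\|g_t\|^2$.
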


We can also bound the expected stability index, which is the final quantity that appears in our convergence theorems.
Using \cref{lem:exp-of-min} together with~\eqref{eq:deltaSPS-upper}, we have 
\begin{align*}
    \E[\delta_t^{\SPS{}}] \leq \min\{\alpha_t \E\|g_t\|^2, \E[f(x_t)- C_{t}]\}.
\end{align*}
The second term in the $\min$-operator can be decomposed as
\begin{align*}
    &\hspace{-4ex}\E[f(x_t)- C_{t}] = \E[f(x_t) - f(x_\star)] \\ +
    &\underbracket{f(x_\star)-\E_s[\inf_z f(z,s)]}_{=:~\sigma^2} + 
    \underbracket{\E_s[\inf_z f(z,s) - C_s]}_{=:~\epsilon_{\text{lb}}}.
\end{align*}
Here we call $\sigma^2$ the \emph{interpolation constant} and $\epsilon_{\text{lb}}$ the \emph{lower-bound estimation error}.
\takeaway{We always have $\delta_t^{\SPS{}} \leq \delta_t^{\SGD{}}$ (assuming both methods are at the same $x_t$ and $s_t$). The gain in stability for \SPS{} is determined by the interpolation constant, and the lower-bound estimation error.}
\subsection{Square-root model (\NGN{})} \label{sec:ngnmodel}
For non-negative loss functions, we can use the \NGN{} (Non-negative Gauss Newton) method proposed by \citet{Orvieto2024}.
They propose to rewrite $f(x) = \E[f(x,s)]=\E[\sqrt{f(x,s)}^2]$ and, for given $s\in \mathcal{S}$, to linearize $y \mapsto \sqrt{f(y,s)}$ around $x$, followed by the square operation. 
This leads us to the following model: for $g \in \partial f(x,s)$, define
\begin{align}\label{eqn:ngn-model}
    f_{x}(y,s) = \big(\sqrt{f(x,s)} + \frac{1}{2\sqrt{f(x,s)}}\iprod{g}{y-x} \big)^2.
\end{align}
This \NGN{} model also enjoys a favorable stability index.
\begin{lemma}\label[lemma]{lem:delta-ngn}
    Let $f(x,s) \geq 0$ for all $x\in \R^d,~s\in \mathcal{S}$. Using model~\eqref{eqn:ngn-model} results in the \NGN{} method \citep{Orvieto2024}
    \begin{align}\label{eq:NGNstep}
    x_{t+1} = x_t - \gamma_t g_t, \quad \gamma_t := \frac{\alpha_t}{1+\frac{\alpha_t}{2f_t}\|g_t\|^2}.
    \end{align}
Its stability index is given by
\begin{equation*}
    \boxed{
    \delta_t^{\NGN{}}\; =\; \frac{\gamma_t}{2} \|g_t\|^2 \; \leq \; \delta_t^{\SGD{}}.
    }
\end{equation*}
\end{lemma}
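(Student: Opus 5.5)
Write $f_t := f(x_t,s_t)>0$ (the case $f_t=0$ is degenerate and handled separately below). The plan is to solve the proximal subproblem \eqref{eqn:update} for the model \eqref{eqn:ngn-model} in closed form and then substitute the minimizer into \eqref{eqn:def-delta}.

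\textbf{Step 1: solve the subproblem.} Substituting $y = x_t + d$, the model becomes $(\sqrt{f_t} + \iprod{g_t}{d}/(2\sqrt{f_t}))^2$. This is a convex quadratic in $d$, so the regularized objective is strongly convex and has a unique minimizer characterized by its first-order condition
\[
\frac{1}{\sqrt{f_t}}\Big(\sqrt{f_t} + \frac{\iprod{g_t}{d}}{2\sqrt{f_t}}\Big) g_t + \frac{d}{\alpha_t} = 0 .
\]
From this equation, $d$ must be parallel to $g_t$. Write $d = -\gamma g_t$ and use the shorthand $q:=\|g_t\|^2$. The condition then reduces to the scalar equation
\[
1 - \frac{\gamma q}{2f_t} = \frac{\gamma}{\alpha_t},
\]
whose solution is
\[
\gamma = \frac{\alpha_t}{1+\alpha_t q/(2f_t)} = \gamma_t .
\]
This recovers \eqref{eq:NGNstep}.

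\textbf{Step 2: evaluate $\delta_t$.} Define $r := 1-\gamma_t q/(2f_t)$. By the scalar equation, $r = \gamma_t/\alpha_t$. At the minimizer, the model value is $f_t r^2$ and the proximal term is $\gamma_t^2 q/(2\alpha_t)$. Therefore
\[
\delta_t = f_t(1-r^2) - \frac{\gamma_t^2 q}{2\alpha_t}.
\]
Factor $1-r^2=(1-r)(1+r)$ and note $f_t(1-r) = \gamma_t q/2$. This gives
\[
f_t(1-r^2) = \frac{\gamma_t q}{2}(1+r) = \frac{\gamma_t q}{2} + \frac{\gamma_t^2 q}{2\alpha_t}.
\]
The $\gamma_t^2$ terms cancel, leaving
\[
\delta_t^{\NGN{}} = \frac{\gamma_t}{2}\|g_t\|^2 .
\]

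\textbf{Step 3: compare with \SGD{}.} Since $f_t>0$, the denominator in $\gamma_t$ is at least $1$, so $\gamma_t\le\alpha_t$. Hence $\delta_t^{\NGN{}}\le \frac{\alpha_t}{2}\|g_t\|^2=\delta_t^{\SGD{}}$, using Section~\ref{sec:linearmodel}.

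The only delicate point is the algebraic cancellation in Step 2. Writing everything in terms of $r=\gamma_t/\alpha_t$ is what makes the cancellation transparent, so I would organize the computation around that identity. For the degenerate case $f_t=0$, the model \eqref{eqn:ngn-model} is undefined as written. The natural convention is $\gamma_t=0$; this is consistent with the limit as $f_t\to0$ and with the fact that a non-negative function has $g_t=0$ at a point where it vanishes. Under that convention $\delta_t=0$ and the claim holds trivially.
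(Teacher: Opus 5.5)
Your proof is correct and follows essentially the same route as the paper. Both solve the proximal step to get $x_{t+1}=x_t-\gamma_t g_t$ (the paper cites Orvieto et al.\ for this, while you derive it from the first-order condition) and substitute into $\delta_t$. Both then cancel the $\gamma_t^2$ terms via the identity $1/\gamma_t = 1/\alpha_t + \|g_t\|^2/(2f_t)$, which your factorization with $r=\gamma_t/\alpha_t$ encodes. Your treatment of $f_t=0$ goes beyond the paper, which tacitly assumes $f_t>0$. There, the assertion that $g_t=0$ needs differentiability: for $|x|$ at $0$, any subgradient in $[-1,1]$ may be chosen. Under your convention $\gamma_t=0$, however, both sides equal zero regardless of $g_t$.
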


Not only do we have $\delta_t^{\NGN{}} \leq \delta_t^{\SGD{}}$, but further if $\alpha_t\to + \infty$ we have $\gamma_t \to 2\frac{f(x_t,s_t)}{\|g_t\|^2}$. Consequently, $\delta_t^{\NGN{}}$ does not scale linearly with $\alpha_t$, which suggests that the method should be very stable for large step sizes.
Also note the relation to the Polyak step size when $C_t=0$ (up to a factor of two). 

\takeaway{It always holds $\delta_t^{\NGN{}} \leq \delta_t^{\SGD{}}$ and further $\delta_t^{\NGN{}}$ scales sub-linearly with respect to $\alpha_t \to +\infty$.}

\paragraph{On Assumptions \ref{item:tA1} and \ref{item:tA2}.} \ref{item:tA1} holds with $\lambda=0$ due to the convexity of a composition of convex and linear mappings. Regarding \ref{item:tA2}, we clearly have $f_x(x,s) = f(x,s)$. However, the second part of \ref{item:tA2} is less trivial. It is equivalent to the condition that, for some $\rho_s\geq 0$, it holds
\begin{align*}
    f(y,s) + \tfrac{\rho_s}{2}\|y-x\|^2 \geq f(x,s) + \iprod{g}{y-x} + \frac{\iprod{y-x}{g}^2}{4f(x,s)} 
\end{align*}
for all $x,y\in \R^d$, all $s\in \mathcal{S}$ and all $g\in \partial f(x,s)$.
Assuming a global upper bound on $\|g\|$ and lower bound on $f(x,s)$ this can be achieved with $\rho_s > \eta_s$. For the convex case, where we require $\rho_s=0$, the above condition can be satisfied at least on a bounded domain, if $f(\cdot,s)$ is $\alpha$-exp-concave (see \citet[Def.\ 4.1]{Hazan2019a}. This follows from \citet[Lem.\ 4.3]{Hazan2019a}, considering that $\frac{1}{2f(x,s)} \leq \frac{1}{2\inf_z f(z,s)}$ and the latter term can be made arbitrarily small by adding a constant to $f(\cdot, s)$. 

\paragraph{Extensions of \NGN{}.} We show in \cref{sec:app:ngn-generalizations} of the Appendix how to extend the key idea of \NGN{} to general composition of maps. In particular, we show that under mild assumptions on these maps, it always holds that the resulting stability index is smaller than the one of \SGD{}.

\subsection{Exact Model (\SPP{})} \label{sec:sppmodel}
Using the exact model, that is $f_x(y,s) = f(y,s)$, recovers the \SPP{} method. Assume for this part that $\bar{\eta} := \sup_{s\in \mathcal{S}} \eta_s < +\infty$. Then, $f(\cdot,s)$ is $\bar{\eta}$-weakly convex for all $s\in \mathcal{S}$, and update \eqref{eqn:update} is well-defined for $\alpha_t \in (0, 1/\bar{\eta})$. In particular, \ref{item:tA1} is satisfied with $\lambda=\bar{\eta}$, and \ref{item:tA2} with $\rho_s=0$.

\begin{lemma}\label[lemma]{lem:delta-spp}
    Assume that $f(\cdot,s)$ is $\bar{\eta}$-weakly convex and $\alpha_t \in (0, 1/\bar{\eta})$. Then, the stability index for the exact model $f_x(y,s) = f(y,s)$ is bounded by
    \begin{align*}
    \boxed{
        \delta_t^{\SPP{}} \leq \min\{\tfrac{\alpha_t}{2(1-\alpha_t \bar{\eta})}\|g_t\|^2, f(x_t,s_t) - \inf_{y\in \R^d} f(y,s_t)\}.}
    \end{align*}
    In particular, if $\bar{\eta} = 0$, then $\delta_t^{\SPP{}} \leq \delta_t^{\SGD{}}$ for all $\alpha_t > 0$.
\end{lemma}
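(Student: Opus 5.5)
For the exact model we have $\delta_t^{\SPP{}} = f(x_t,s_t) - \Psi_t$ with
\[
\Psi_t = \min_{y\in\R^d}\big\{ f(y,s_t) + \tfrac{1}{2\alpha_t}\|y-x_t\|^2\big\} = \env{f(\cdot,s_t)}^{\alpha_t}(x_t).
\]
Here we use that $x_{t+1}$ is the (unique, by strong convexity of the objective when $\alpha_t<1/\bar\eta$) minimizer. So the task is to lower bound the Moreau envelope $\Psi_t$ in two different ways. Each lower bound gives one of the two terms in the $\min$.

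\textbf{Second term.} Since $\frac{1}{2\alpha_t}\|x_{t+1}-x_t\|^2 \geq 0$, we get $\Psi_t \geq f(x_{t+1},s_t) \geq \inf_{y} f(y,s_t)$. Hence
\[
\delta_t^{\SPP{}} \leq f(x_t,s_t) - \inf_y f(y,s_t).
\]
This step is immediate.

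\textbf{First term.} I will use the weak convexity inequality \eqref{eqn:weak-convexity} with $\eta_s\le\bar\eta$, at the base point $x_t$ and with $g_t\in\partial f(x_t,s_t)$. For every $y$,
\[
f(y,s_t) + \tfrac{1}{2\alpha_t}\|y-x_t\|^2 \geq f(x_t,s_t) + \iprod{g_t}{y-x_t} + \tfrac{1-\alpha_t\bar\eta}{2\alpha_t}\|y-x_t\|^2 .
\]
Since $1-\alpha_t\bar\eta>0$, the right-hand side is a strongly convex quadratic in $y$. Its minimum over $y$ equals
\[
f(x_t,s_t) - \frac{\alpha_t}{2(1-\alpha_t\bar\eta)}\|g_t\|^2,
\]
attained at $y = x_t - \frac{\alpha_t}{1-\alpha_t\bar\eta}g_t$. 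Minimizing the left-hand side over $y$ gives $\Psi_t$, so
\[
\Psi_t \geq f(x_t,s_t) - \frac{\alpha_t}{2(1-\alpha_t\bar\eta)}\|g_t\|^2,
\]
which is the first bound on $\delta_t^{\SPP{}}$.

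Combining the two bounds gives the stated $\min$. When $\bar\eta=0$, the first term becomes $\frac{\alpha_t}{2}\|g_t\|^2 = \delta_t^{\SGD{}}$ from \cref{sec:linearmodel}, which yields $\delta_t^{\SPP{}}\le\delta_t^{\SGD{}}$.

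\textbf{Main subtlety.} The only delicate point is which subgradient $g_t$ to use. The statement implicitly takes $g_t\in\partial f(x_t,s_t)$ as in \SGD{}, and the argument above works for any such subgradient. I would also note well-posedness: for $\alpha_t<1/\bar\eta$ the prox objective is $(1/\alpha_t-\bar\eta)$-strongly convex, so $x_{t+1}$ exists and $\Psi_t$ is finite. Beyond these points no real obstacle is expected, since the argument is two elementary lower bounds on the Moreau envelope.
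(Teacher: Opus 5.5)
Your proof is correct and follows essentially the paper's argument: the second term comes from $\Psi_t \geq \inf_y f(y,s_t)$, and the first from the weak-convexity lower bound at $x_t$ with subgradient $g_t$. The only cosmetic difference is that the paper evaluates this lower bound at $x_{t+1}$ and applies Young's inequality with $\epsilon = \alpha_t/(1-\alpha_t\bar{\eta})$, whereas you minimize the quadratic minorant over all $y$ in closed form — the same completion of the square.
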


%
Using \cref{lem:exp-of-min} reveals the relationship to the interpolation constant $\sigma^2$: 
\begin{align*}
    \E[\delta_t^{\SPP{}}] \leq \min\{\tfrac{\alpha_t}{2(1-\alpha_t \bar{\eta})}\E\|g_t\|^2, \E[f(x_t)-f(x_\star)] + \sigma^2\}.
\end{align*}

%
\takeaway{In the convex case, $\delta_t^{\SPP{}}$ scales sub-linearly with $\alpha_t$. The bound on $\E[\delta_t^{\SPP{}}]$ is also capped by sub-optimality and  the interpolation constant $\sigma^2$.
}

\begin{remark}
    Note that for finite-sum problems, where $f(\cdot,s)$ defines a mini-batch loss, the choice of the batch size influences the value of $\sigma^2$ (for example, consider linear regression, where $\inf_z f(z,s)=0$ as long as the batch size is smaller than the dimension $d$, but $\inf_z f(z,s)$ can be non-zero for large batch sizes).
    The above shows that the batch size has an impact on the stability of \SPP{}; indeed we observe this in the experiments below, and is further confirmed by the empirical results of \citet{Milzarek2024}.
\end{remark}

\paragraph{Verification with PEP.} In contrast to the other methods, for \SPP{} we can only derive an \emph{upper bound} of $\delta_t$. In this paragraph, we verify whether our bound for $\delta_t^{\SPP{}}$ is tight in a worst-case sense (for the convex case, $\bar{\eta}=0$). For this, fix $x \in \R^d$ and let $x_+ = \argmin_{y\in \R^d} f(y,s) + \frac{1}{2\alpha}\|y-x\|^2$ be the \SPP{} update for fixed $t\in \N$.
Denote by $f_s$ the mapping $y \mapsto f(y,s)$.
Let $\delta(\alpha) = f_s(x) - \env{f_s}^\alpha(x)$. Denote by $\mathcal{C}$ the class of lower-bounded convex functions. Assume that $u_\star \in \argmin_z f_s(z)$. For $G> 0$, and $g\in \partial f_s(x)$, we are interested in the quantity
\begin{align*}
    \Lambda(\alpha) := \max_{f_s \in \mathcal{C}} \delta(\alpha) ~ \text{s.t.\ }~ f_s(x) - \min f_s \leq 1,~ \|g\| \leq G.
\end{align*}
It is easy to see that $\env{f_s}^\alpha(x) \searrow \inf f_s$ with $\alpha\to \infty$, and hence $\Lambda(\alpha)\leq f_s(x) - \inf f_s$. However, this bound is not necessarily tight, especially for small $\alpha$.

Therefore, we compute $\Lambda(\alpha)$ using the PEP (Performance Estimation Problem) framework \citep{Drori2014,Taylor2017} implemented in \texttt{PEPit} \citep{Goujaud2024}.
\cref{fig:spp-pepit-delta} reveals that for the worst-case example the bound
    $\Lambda(\alpha) \leq \min\{\frac{\alpha}{2} G^2, f_s(x) - \inf f_s(x)\}$
is indeed very tight across all $\alpha > 0$; only in the region where the transition of the minimum happens, we can observe that $\Lambda(\alpha)$ is slightly smaller.
\begin{remark}[Practicality of \SPP{}]
    \SPP{} is often considered a theoretical method only, as the update in general cannot be solved in closed form. However, \citet{Milzarek2024} show how the update subproblem can be solved in competitive runtime for specific problem structures that naturally appear in statistical learning (generalized linear models). Their technique allows for weakly convex loss functions, mini-batching as well as nonsmooth regularization functions.
\end{remark}

\subsection{Summary}
We summarize the main aspects of our theoretical analysis:

\hspace{2ex}\textbf{1)} For the question of training stability with respect to large learning rates, the key quantity is $\delta_t$ and how it scales (in expectation) with respect to the step size $\alpha_t$.

\hspace{2ex}\textbf{2)} We derive the value of $\delta_t$ for four methods, \SGD{}, \SPS{}, \NGN{}, and \SPP{}. We show that \SPS{}, \NGN{}, and \SPP{} have stability index $\delta_t$ that is smaller or equal to that of \SGD{}.  Indeed, for \SGD{} the scaling of $\delta_t$ wrt.\ $\alpha_t$ is linear, whereas for \NGN{}, \SPS{} and \SPP{} this linear growth is flattened. This is illustrated in \cref{fig:toy-delta-illustration}.  As a by-product of our analysis of $\delta_t$, by plugging our bounds for  \NGN{} and \SPS{} into~\cref{thm:avg-iterate,thm:last-iterate}, we get the first convergence analysis in the convex and non-smooth setting for these methods (see~\cref{sec:app:conv-comparison}). 

\hspace{2ex}\textbf{3)} We stress that (weak) convexity of $f(\cdot,s)$ is \emph{not} required to derive the above expressions of $\delta_t$ (with the exception of \SPP{}). Convexity is only required for the argument how $\delta_t$ impacts the suboptimality bound \cref{thm:avg-iterate,thm:last-iterate}. Our extension to weakly convex problems shows that $\delta_t$ is informative of stability also for non-convex problems -- and indeed our experiments below confirm that.

We state again the key quantity $\delta_t$ for each of the methods we considered.\footnote{For \SPP{} we state the formula for the convex case, $\bar{\eta}=0$.} While we focus on these four models, our analysis can be extended to other model-based methods.
\begin{align}\label{eqn:delta-summary}
    \begin{split}
        \delta_t^{\SGD{}} &= \tfrac{\alpha_t}{2} \|g_t\|^2, \\
        \delta_t^{\SPS{}} &= \tau_t[1-\frac{\tau_t}{2\alpha_t}]\|g_t\|^2, \quad
        (\tau_t \text{ from } \eqref{eqn:def-tau})
        ,\\
        \delta_t^{\NGN{}} &= \tfrac{\gamma_t}{2} \|g_t\|^2, \quad
        (\gamma_t \text{ from } \eqref{eq:NGNstep}),\\
        \delta_t^{\SPP{}} &\leq \min\big\{\tfrac{\alpha_t}{2}\|g_t\|^2, f(x_t,s_t) - \inf_{y\in \R^d} f(y, s_t) \big\}.
    \end{split}
\end{align}
The above summary on how $\delta_t$ scales with $\alpha_t$ for different methods can be seen even clearer for the special case of quadratic functions where the above expressions simplify  further (see \cref{sec:app:quadratics})
\begin{figure}[t]
    \begin{subfigure}{0.49\columnwidth}
       \centering
        \includegraphics[width=0.99\linewidth]{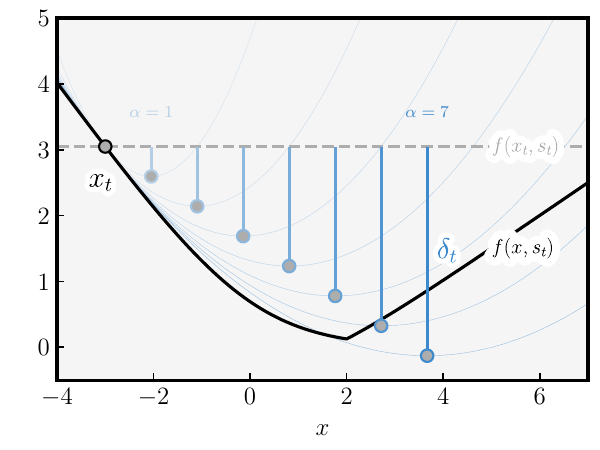}
        \caption{\SGD{}}
    \end{subfigure}
    \begin{subfigure}{0.49\columnwidth}
       \centering
        \includegraphics[width=0.99\linewidth]{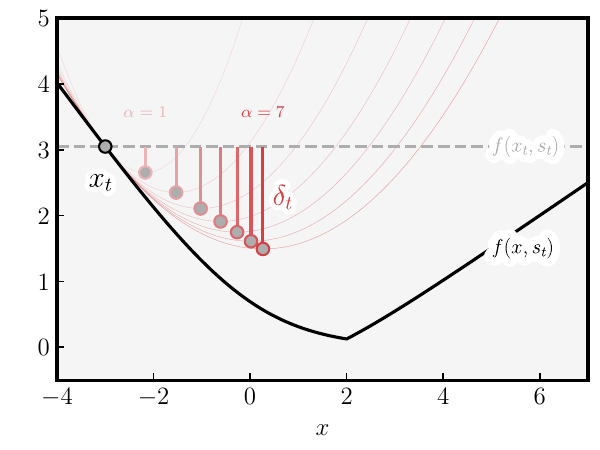}
        \caption{\NGN{}}
    \end{subfigure}
    \caption{Illustration of the stability index $\delta_t$ for \SGD{} (left) and \NGN{} (right), for the function from \cref{fig:one}. Thin colored lines display the objective of the update step \eqref{eqn:update}. For growing $\alpha$ this illustrates how $\delta_t$ grows much slower for \NGN{} compared to \SGD{}.}
    \label{fig:toy-delta-illustration}
\end{figure}
\section{Experiments}\label{sec:experiments}
The main objective of the experiment section is to verify whether the theoretical results are reflected in the real-world performance of \SGD{}, \SPS{}, \NGN{}, and \SPP{} for different learning tasks. For simplicity, we always consider the bound $\Omega_T$ from  \cref{thm:last-iterate}. We do this comparison in a \textbf{qualitative rather than quantitative} fashion, by looking at final training loss and bound as a function of the step size~$\alpha$.
Note that for this purpose \emph{training stability} is our main interest rather than generalization properties. Hence, we usually display the training loss.
We would like to stress that it is \emph{not} the goal of our experiments to convince the reader of the superior performance of \SPS{} or \NGN{}; their performance has been studied extensively for many deep learning tasks against competitive baselines \citep{Schaipp2024a,Islamov2025,Schaipp2023}.

\paragraph{Finite-sum problems.} In all experiments, we consider the objective given as a finite sum over data points, that is $f(x) = \frac{1}{n} \sum_{i=1}^n f_i(x)$ for some $f_i:\R^d \to \R$. For batch size $b\in \N$, we can define $\mathcal{S}$ to be the set of all subsets of cardinality $b\in \N$ of $[n]$ ($s \in \mathcal{S}$ containing the indices of the batch).
Now, for $s=\{j_1,\ldots,j_b\}$ define $f(x,s) := \frac{1}{b}\sum_{i=1}^b f_{j_i}(x)$. Then, if $s$ follows a uniform distribution, we obtain $\frac{1}{n}\sum_{i=1}^n f_i(x) = f(x)= \E_s[f(x,s)]$.

\paragraph{General setup.} In all experiments, we parametrize the step size/learning rate as $\alpha_t = \alpha \cdot \eta_t$, where we call $\alpha > 0$ the \emph{base learning-rate}, and $(\eta_t)_t \subset \R_{> 0}$ is called the \emph{schedule}. Our main interest is the performance of methods as a function of $\alpha$, in particular their stability when $\alpha$ becomes large. By default, we set the schedule $(\eta_t)_t$ to be constant.
For \SPS{}, if not explicitly mentioned otherwise, we set the lower bound $C_t=0$ in every iteration.

\subsection{non-convex Deep Learning Tasks}
We train a \texttt{ResNet20} on \texttt{CIFAR10}. 
We run \SGD{}, \SPS{}, and \NGN{} for $20$ epochs on a range of base learning-rates $\alpha\in [10^{-2}, 10^2]$. This rather short training horizon will be sufficient to analyze stability wrt.\ large learning rates: in our experiments, the initial training stage is usually decisive for stability.
For \SGD{}, we run two settings:
\begin{enumerate}[label=(\arabic*)]
    \item \textbf{(without warmup):} constant learning rate schedule, that is, $\eta_t=1$ for all $t\in \N$,
    \item \textbf{(with warmup):} constant schedule after $100$ iterations of linear warmup, from $\eta_1 = 10^{-10}$ to $\eta_{100} = 1$.
\end{enumerate}

We log the gradient norm $\|g_t\|$ and batch loss $f(x_t,s_t)$ in every step, which allows us to track $\{\delta_t^{\SGD{}}, \delta_t^{\SPS{}}, \delta_t^{\NGN{}}\}$. We approximate $\E[\delta_t]$ by averaging over three different seeds (with identical initialization but different data ordering).

\textit{Discussion:} \cref{fig:cifar10-loss-and-bound} shows that, despite the problem at hand being non-convex, the bound from \cref{thm:last-iterate} closely reflects the range of base learning-rate for which we obtain a good final training loss. In particular, it reflects the fact that \SGD{} starts to degrade for $\alpha \geq 1$, whereas \SPS{} and \NGN{} work well up to $\alpha \approx 10$. For example, the base learning-rate for which the bound of \SPS{} is minimal, matches the one for which the \emph{actual train loss} is smallest. When using \SGD{} with warmup (\cref{fig:cifar10-loss-and-bound-warmup}), it becomes more stable and works reasonably for roughly one order of magnitude larger $\alpha$. Again, this is closely reflected in the bound as well.

For \cref{fig:cifar10-loss-and-bound} we set the unknown parameter $D = 50$ based on $\|x_T - x_1\| \approx 50$.\footnote{However, this is highly approximative, as two different methods/ learning rates can result in similar final training loss, but rather distinct final model norm.
}
and $T=20$ epochs. We ablate these choices in \cref{fig:cifar10-multipleD,fig:cifar10-loss-and-bound-ablate-T}, showing that it does not impact our main observation. Further, \cref{fig:cifar10-loss-and-bound-ablate-val} shows that the stability in terms of validation loss is very similar.

\cref{fig:cifar10-evolution-delta} depicts the evolution of $\delta_t$ over time: for \SPS{} and \NGN{}, the values of $\delta_t$ early in training grow much slower compared to \SGD{} with the same $\alpha$; this confirms our theoretical finding that $\{\delta_t^{\SPS{}}, \delta_t^{\NGN{}} \} \leq \delta_t^{\SGD{}}$ even though the learning task at hand is non-convex. 

%
\begin{figure}[t]
    \begin{subfigure}{0.49\columnwidth}
       \centering
        \includegraphics[width=0.99\linewidth]{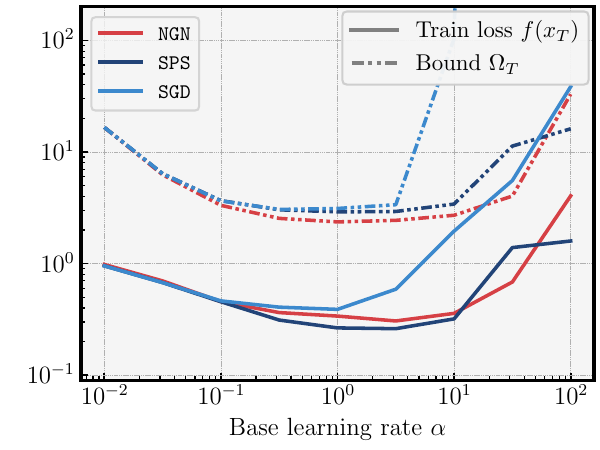}
        \caption{without warmup}
    \end{subfigure}
    \begin{subfigure}{0.49\columnwidth}
       \centering
        \includegraphics[width=0.99\linewidth]{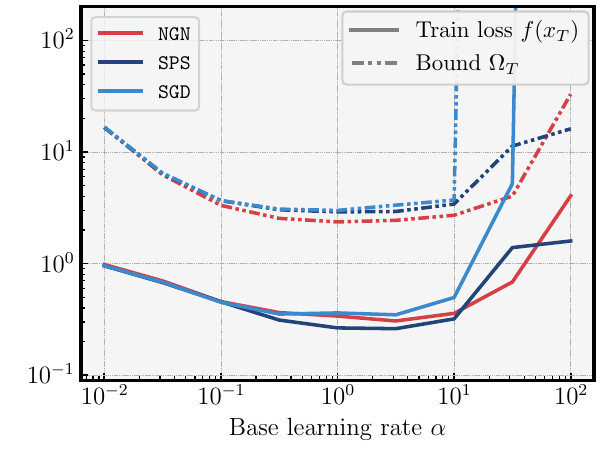}
        \caption{with warmup}
        \label{fig:cifar10-loss-and-bound-warmup}
    \end{subfigure}
    \caption{\texttt{ResNet20} on \texttt{CIFAR10}: Actual training loss \textbf{(solid lines)} and value of the bound $\Omega_T^{\text{last}}$,~\cref{thm:last-iterate} \textbf{(dashed)} with $D=50$. After $T = 20$ epochs, \SPS{} and \NGN{} are more stable than \SGD{} for large $\alpha$ and achieve a smaller loss. This behavior is (qualitatively) reflected in the bound. \textbf{(Right)} Warmup allows \SGD{} to use a larger learning rate. Again, this is reflected in the bound $\Omega_T^{\text{last}}$.}
    \label{fig:cifar10-loss-and-bound}
\end{figure}
%
\begin{figure}[t]
    \begin{subfigure}{0.49\columnwidth}
       \centering
        \includegraphics[width=0.99\linewidth]{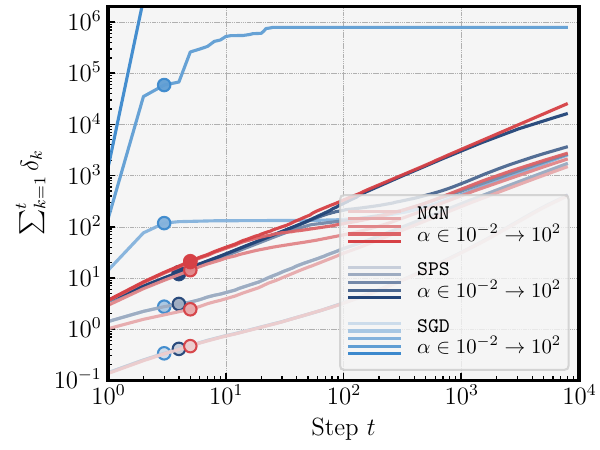}
        \caption{without warmup}
    \end{subfigure}
    \begin{subfigure}{0.49\columnwidth}
       \centering
        \includegraphics[width=0.99\linewidth]{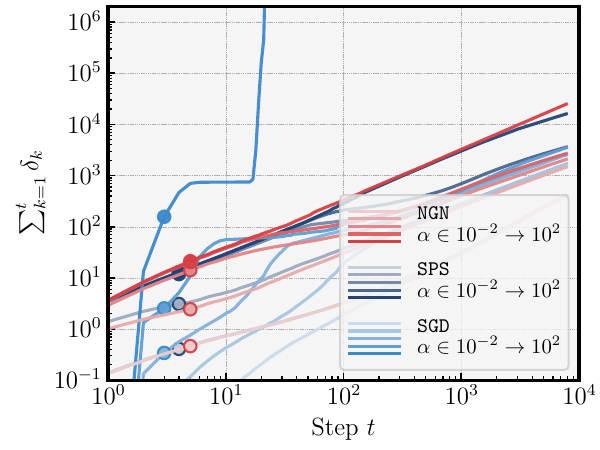}
        \caption{with warmup}
    \end{subfigure}
    \caption{\textbf{(Left)} $\delta_t$ explodes for \SGD{} (without warmup) in the first iterations when $\alpha$ is large. \textbf{(Right)} When using warmup over $100$ steps, the increase of $\delta_t$ is slowed down.}
    \label{fig:cifar10-evolution-delta}
\end{figure}
\newcommand{\bA}{\mathbf{A}}
\newcommand{\bb}{\mathbf{b}}

\subsection{Convex Regression Tasks}

We also run a set of convex tasks, where the assumptions of the theory are satisfied.
First, for ridge regression on synthetic data, we compare \SGD{}, \SPP{} and \SPS{}; we omit \NGN{} here for the sake of clarity of presentation.
We further compare \SGD{}, \SPS{} and \NGN{} on linear classification tasks.

\paragraph{Linear regression.}
Assume we have given a feature matrix $\bA \in \R^{n \times d}$ and targets $\bb \in \R^n$.
The objective is given by $f(x) = \frac{1}{2n} \|\bA x-\bb\|_2^2$.
For a batch $s=\{i_1,\ldots, i_b\}$, denote by $\bA_s\in \R^{b \times d}$ and $\bb_s \in \R^b$ the subset of rows of $\bA$ and $\bb$ indexed by $s$; hence, we have $f(x,s) = \frac{1}{2b} \|\bA_s x - \bb_s\|_2^2$. \cref{lem:ridge-spp-update} in the Appendix derives the \SPP{} update for this specific problem.

\textit{Discussion:} We set $n=50$, $d=10$, $b=5$. The data $\bA,~\bb$ is synthetically generated, in a way such that every datapoint can be perfectly fitted (hence we have interpolation, $\sigma^2 = 0$). The schedule is chosen to be constant for all methods.

\cref{fig:linreg-bound-and-delta} shows that \SPS{} and \SPP{} achieve reasonably good performance for values of $\alpha$ up to $10^2$, while \SGD{} starts to fail at $\alpha \approx 0.03$. Again, the bound from \cref{thm:last-iterate} closely reflects this behavior; it also matches the true performance regarding the fact that \SPP{} ``overtakes" \SPS{} for values of $\alpha \geq 1$. Next, we run three problem variations:
\begin{enumerate}[label=(V\arabic*)]
    \item \label{item:ablation-V1} We add noise to the targets $\bb$. As $n > d$, this results in $f_\star >0$, while $\inf_z f(z,s)=0$ still holds (as the batch size is smaller than $d$). Consequently, our theory predicts \SPS{} and \SPP{} to perform worse now.
    \item \label{item:ablation-V2} We run the noisy setting with larger batch size of $b=25$ (and 50 epochs instead of 10 to keep the number of steps equal). As $\inf_z f(z,s) >0$, we don't expect improvement for \SPS{} (as $C_t=0$), but for \SPP{} to regain advantage over \SGD{} as $\sigma^2$ is smaller than in \ref{item:ablation-V1}.
    \item \label{item:ablation-V3} We again run the noiseless case, but now with $C_t = -2$ for \SPS{}. Here, even though $\sigma^2=0$ we expect \SPS{} to loose all advantage compared to \SGD{}.
\end{enumerate}

\cref{fig:linreg-ablation} shows that for all variants the actual performance is in line with what we would expect according to our theory. 

\begin{figure}[t]
    \begin{subfigure}{0.49\columnwidth}
       \centering
        \includegraphics[width=0.99\linewidth]{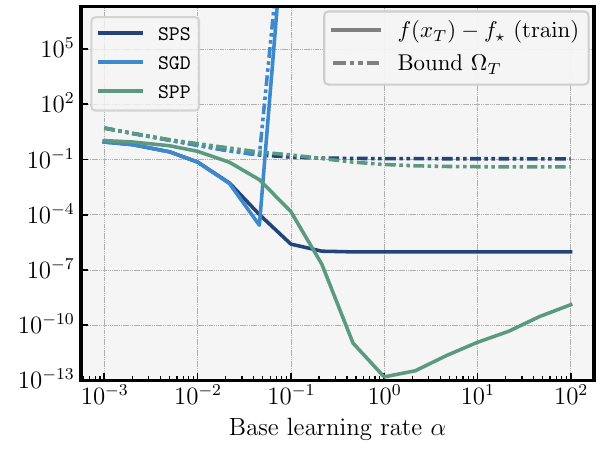}
        \caption{Train loss and bound}
    \end{subfigure}
    \begin{subfigure}{0.49\columnwidth}
       \centering
        \includegraphics[width=0.99\linewidth]{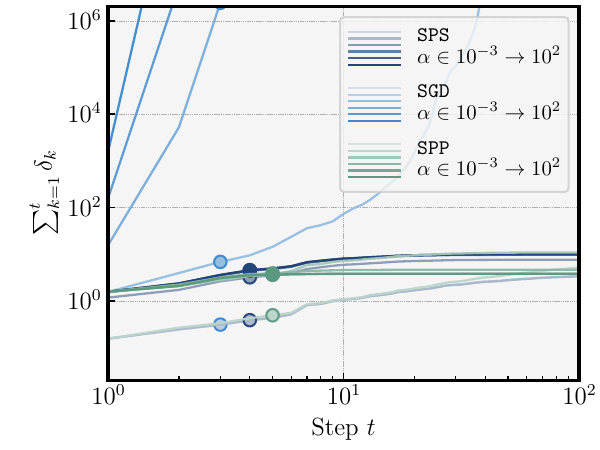}
        \caption{Evolution of $\delta_t$}
    \end{subfigure}
    \caption{Linear regression: \textbf{(Left)} Actual training loss \textbf{(solid lines)} and value of the bound $\Omega_T^{\text{last}}$,~\cref{thm:last-iterate} \textbf{(dashed)} with $D=1$. \textbf{(Right)} The values of $\delta_t$ quickly explode for \SGD{}, whereas for \SPP{} and \SPS{} they remain bounded when $\alpha$ becomes large.}
    \label{fig:linreg-bound-and-delta}
\end{figure}
%

\begin{figure}[t]
    \begin{subfigure}{0.32\columnwidth}
       \centering
        \includegraphics[width=0.99\linewidth]{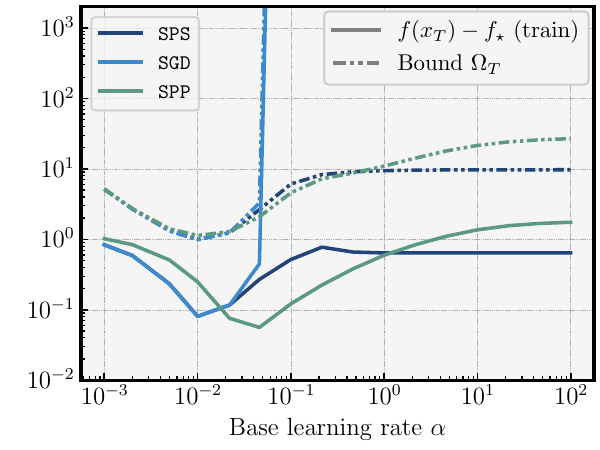}
        \caption{Variant \ref{item:ablation-V1}}
    \end{subfigure}
    \begin{subfigure}{0.32\columnwidth}
       \centering
        \includegraphics[width=0.99\linewidth]{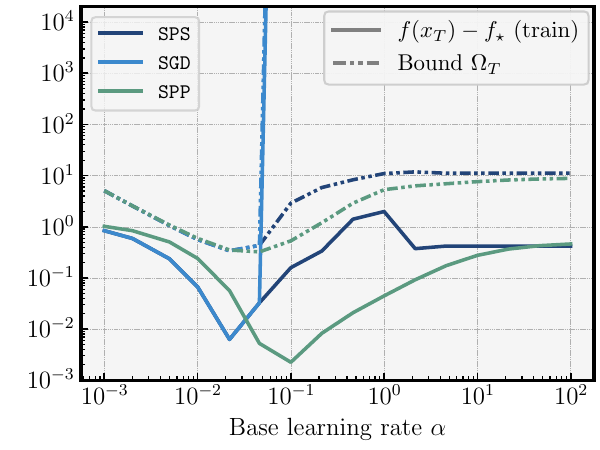}
        \caption{Variant \ref{item:ablation-V2}}
    \end{subfigure}
    \begin{subfigure}{0.32\columnwidth}
       \centering
        \includegraphics[width=0.99\linewidth]{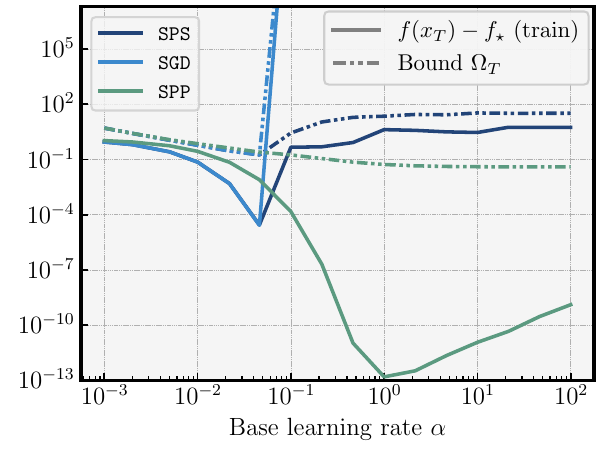}
        \caption{Variant \ref{item:ablation-V3}}
    \end{subfigure}
    \caption{Variations for the linear regression problem.}
    \label{fig:linreg-ablation}
\end{figure}

\paragraph{Multi-class logistic regression.}
We run multi-class logistic regression for two datasets (\texttt{vowel} and \texttt{dna}) from \texttt{LIBSVM} \citep{Chang2011}. Further dataset details are reported in \cref{sec:app:experiments-supplementary}.

\textit{Discussion:} For both datasets (see \cref{fig:vowel-bound-and-delta,fig:dna-bound-and-delta}), we again observe that the range of step sizes with good final loss roughly coincides with the range obtained from the theoretical bound (here we set $D=100$ for \texttt{vowel} and $D=50$ for \texttt{dna}). In particular, for \texttt{vowel} there is almost no advantage of \SPS{} over \SGD{}, which is likely due to the lower-bound estimation error. This confirms that even for badly specified lower bounds, \SPS{} does not perform worse than \SGD{}, but it also shows no advantage. \NGN{} is affected by this to a lesser extent.

\begin{figure}[H]
    \begin{subfigure}{0.49\columnwidth}
       \centering
        \includegraphics[width=0.99\linewidth]{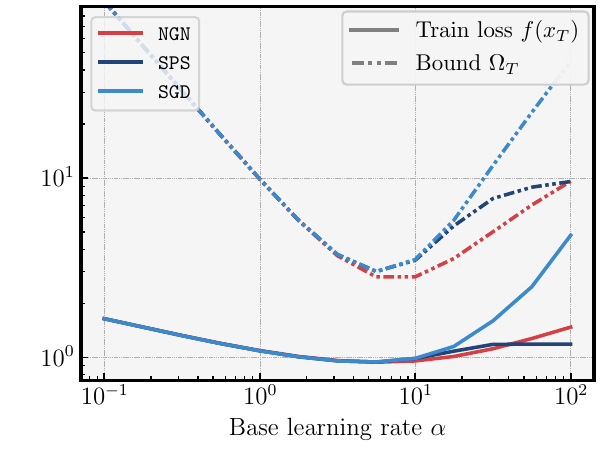}
        \caption{Train loss and bound}
    \end{subfigure}
    \begin{subfigure}{0.49\columnwidth}
       \centering
        \includegraphics[width=0.99\linewidth]{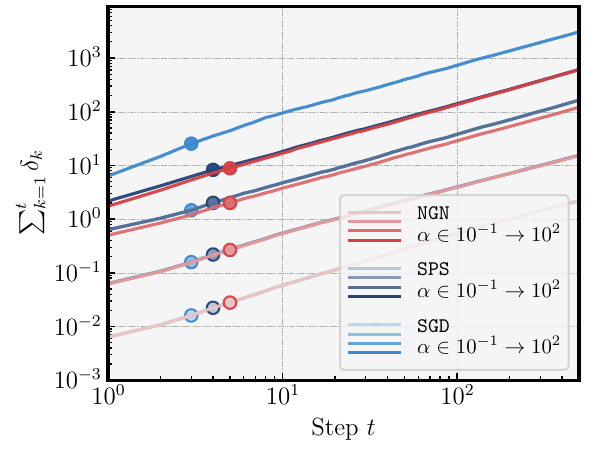}
        \caption{Evolution of $\delta_t$}
    \end{subfigure}
    \caption{Logistic regression on \texttt{vowel} dataset.}
    \label{fig:vowel-bound-and-delta}
\end{figure}

\section{Conclusion}

We have presented a theoretical analysis for step-size stability of several stochastic optimization methods. Within our framework, we prove that adaptive methods such as \SPS{} or \NGN{} are always more stable than \SGD{} in terms of a (weakly) convex convergence bound. Our experiments show a close relationship between the convex bound and the actual training loss, even for non-convex deep learning tasks.

Finally, we want to point out again the main limitations and possible directions for future work.
Our current analysis can be converted into convergence rates by applying a standard Lipschitz bound on $\delta_t$ (see \cref{sec:app:conv-comparison} for details); however, this is too coarse to capture the advantage of adaptive methods like \SPS{} and \NGN{} and hence not sufficient to show a speedup. Tightening this conversion from bound to convergence rate would be an interesting extension.

On the practical side, an open question is how tracking the stability index $\delta_t$ could be leveraged in practice, for example, to terminate unstable runs early. This paper does not explore such heuristics. Another limitation of our current work is that the analysis is restricted to \SGD-based updates, not including momentum, weight decay or preconditioning. For the Polyak step size, it has been shown that it can be combined with all of these \citep{Schaipp2024a}, and additionally recent works show that it is beneficial in combination with base optimizers like \texttt{Muon} \citep{Crawshaw2025} and \texttt{ScheduleFree} \citep{Defazio2026}. Hence, a direction for future work would be to extend our analysis to other update geometries beyond the model-based framework presented here.

\clearpage

\section*{Acknowledgements}
We thank Umut \c{S}im\c{s}ekli and Francis Bach for comments and feedback.
A.~Taylor is supported by the European Union (ERC grant CASPER 101162889). Fabian Schaipp is supported by the French government under the management of Agence Nationale de la Recherche as part of the “Investissements d’avenir” program,
reference ANR-19-P3IA-0001 (PRAIRIE 3IA Institute), and the
European Research Council Starting Grant DYNASTY – 101039676.

\section*{Impact Statement}

This paper presents work whose goal is to advance the field of Machine
Learning. There are many potential societal consequences of our work, none
which we feel must be specifically highlighted here.

\bibliography{lib}
\bibliographystyle{icml2026}

\newpage
\appendix
\onecolumn

\section{Auxiliary Lemmas and Missing Proofs}\label{sec:app:proofs}

\begin{lemma}\label[lemma]{lem:exp-of-min}
    Let $X, Y$ be random variables. Then $\E[\min\{X,Y\}] \leq \min\{\E[X], \E[Y]\}$.
\end{lemma}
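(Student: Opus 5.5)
The plan is to use only two facts about expectations, monotonicity and linearity. We assume throughout that $X$ and $Y$ are integrable, so that $\E[X]$ and $\E[Y]$ are finite. This is the setting in which the lemma is applied: for \SPS{} and \SPP{}, the random variables are $\alpha_t\|g_t\|^2$ and $f(x_t,s_t)-C_t$. With this assumption, $\min\{X,Y\}$ is also integrable, because it is measurable and satisfies $|\min\{X,Y\}|\leq |X|+|Y|$. Hence $\E[\min\{X,Y\}]$ is well defined.

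The key step is the pointwise inequality. For every outcome $\omega$ we have $\min\{X(\omega),Y(\omega)\} \leq X(\omega)$ and $\min\{X(\omega),Y(\omega)\} \leq Y(\omega)$. Monotonicity of the expectation then gives $\E[\min\{X,Y\}] \leq \E[X]$ and $\E[\min\{X,Y\}] \leq \E[Y]$. A number that lies below both $\E[X]$ and $\E[Y]$ also lies below their minimum. This gives
\begin{align*}
\E[\min\{X,Y\}] \leq \min\{\E[X],\E[Y]\}.
\end{align*}

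There is no real obstacle here. The only delicate point is integrability, which the assumption above settles. The same argument also covers extended-valued expectations, as long as they are well defined: if, for example, $\E[X]=+\infty$, then the inequality $\E[\min\{X,Y\}]\leq \E[X]$ holds trivially. The same pointwise reasoning applies verbatim to conditional expectations $\E_t$. This is the form needed when the lemma is combined with \eqref{eq:deltaSPS-upper} and \cref{lem:delta-spp} inside the conditional arguments of \cref{lem:base} and \cref{thm:weakly-convex}.
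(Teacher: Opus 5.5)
Your proof is correct, but it takes a slightly different route from the paper's. The paper proves the lemma in one line: it applies Jensen's inequality to the concave map $(x,y)\mapsto\min\{x,y\}$, evaluated at the random vector $(X,Y)$. You instead apply monotonicity of the expectation separately to the pointwise bounds $\min\{X,Y\}\leq X$ and $\min\{X,Y\}\leq Y$, and then take the smaller of the two resulting inequalities.

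Your route is more elementary. It needs neither concavity nor the multivariate form of Jensen's inequality; for a function that is a pointwise minimum of linear maps, that form reduces to essentially your argument anyway. It also makes three points transparent: the integrability assumption, the extended-valued case, and the extension to conditional expectations $\E_t$.

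The paper's phrasing has a different advantage: it places the lemma inside the general principle $\E[h(Z)]\leq h(\E[Z])$ for concave $h$. That principle would also cover other concave combinations of the two bounds. For the minimum specifically, your argument is the more direct one.
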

\begin{proof}
    Follows from Jensen's inequality and the fact that $(x,y)\mapsto \min\{x,y\}$ is concave.
\end{proof}

\begin{lemma}\label[lemma]{lem:ridge-spp-update}
    For $\lambda > 0$, the solution to
    \begin{align*}
        x_{t+1} = \argmin_{y \in \R^d} \tfrac{1}{2b} \|\bA_{s_t} y - \bb_{s_t}\|^2 + \tfrac{\lambda}{2}\|y\|^2 + \frac{1}{2\alpha_t}\|y-x_t\|^2
    \end{align*}
    is given by the solution to the linear system
    \begin{align*}
        \big[\tfrac1b \bA_{s_t}^T \bA_{s_t} + (\lambda + \alpha_t^{-1})\Id{} \big]x_{t+1} = \alpha_t^{-1}x_t + \tfrac{1}{b} \bA_{s_t}^T \bb_{s_t}.
    \end{align*}
\end{lemma}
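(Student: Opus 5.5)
The objective $\phi(y) := \frac{1}{2b}\|\bA_{s_t} y - \bb_{s_t}\|^2 + \frac{\lambda}{2}\|y\|^2 + \frac{1}{2\alpha_t}\|y-x_t\|^2$ is a smooth quadratic. My plan is to show that it is strongly convex, so that its unique minimizer is characterized by the first-order condition $\nabla \phi(y) = 0$. Rearranging this condition should give exactly the stated linear system.

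\textbf{Step 1: uniqueness of the minimizer.} The Hessian is constant and equals
\begin{align*}
\nabla^2 \phi = \tfrac1b \bA_{s_t}^T\bA_{s_t} + (\lambda+\alpha_t^{-1})\Id{}.
\end{align*}
The first term is positive semidefinite, and $\lambda + \alpha_t^{-1} > 0$ since $\lambda>0$ and $\alpha_t>0$. Hence $\phi$ is strongly convex, and it has a unique minimizer $x_{t+1}$, namely the unique stationary point. In particular, the $\argmin$ in the statement is well defined.

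\textbf{Step 2: the first-order condition.} Differentiating term by term gives
\begin{align*}
\nabla\phi(y) = \tfrac1b \bA_{s_t}^T(\bA_{s_t}y - \bb_{s_t}) + \lambda y + \alpha_t^{-1}(y - x_t).
\end{align*}
Setting this to zero at $y = x_{t+1}$, I collect the terms linear in $x_{t+1}$ on the left and move the constant terms $\alpha_t^{-1}x_t$ and $\tfrac1b\bA_{s_t}^T\bb_{s_t}$ to the right. This yields the claimed system.

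\textbf{Step 3: well-posedness of the system.} Because the coefficient matrix is positive definite, it is invertible. So the linear system has exactly one solution, and by Step 1 that solution coincides with the minimizer.

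None of these steps is a real obstacle, since the argument is a direct calculation. The only point needing care is the invertibility argument, which is what identifies the minimizer with the solution of the linear system.
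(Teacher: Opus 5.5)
Your proposal is correct and follows the paper's proof, which just invokes the necessary and sufficient first-order optimality condition for this strongly convex quadratic; your Steps 1--3 fill in the details (strong convexity, the gradient computation, invertibility) that the paper leaves implicit. As a minor aside, strong convexity already follows from $\alpha_t>0$, so the assumption $\lambda>0$ is not needed for the argument.
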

\begin{proof}
    Follows from the necessary and sufficient optimality conditions.
\end{proof}

\subsection{Proofs for \cref{sec:theory-weakly-convex}}\label{sec:app:proofs-weakly-convex}

\begin{proof}[Proof of \cref{lem:weakly-convex-base}]
    Let us define $\Psi_t(x) = f_{x_t}(x,s_t) + \frac{1}{2\alpha_t}\|x-x_t\|^2$. Then, due to \ref{item:tA1}, $\Psi_t$ is $(\frac{1}{\alpha_t} - \lambda)$-strongly convex. Further, $x_{t+1}$ is a minimizer of $\Psi_t$. Therefore, for any $u\in \R^d$ it holds
    \begin{align*}
        \Psi_t(x_{t+1}) + \frac{1-\lambda\alpha_t}{2\alpha_t}\|u-x_{t+1}\|^2 \leq \Psi_t(u).
    \end{align*}
    Plugging in $u=\hat{x}_t$ and the definition of $\Psi_t$, we get
    \begin{align}\label{eqn:weak-conv-start-ineq}
        \frac{1-\lambda\alpha_t}{2\alpha_t}\|x_{t+1} - \hat{x}_t\|^2 \leq
        \frac{1}{2\alpha_t}\|\hat{x}_t - x_t\|^2 - \frac{1}{2\alpha_t}\|x_{t+1} - x_t\|^2 - [f_{x_t}(x_{t+1},s_t) - f_{x_t}(\hat{x}_t,s_t)].
    \end{align}
    Similar to the proof of \cref{lem:base}, from \ref{item:tA1} and \ref{item:tA2} we get
    \begin{align*}
        f_{x_t}(x_{t+1},s_t) - f_{x_t}(\hat{x}_t,s_t) &= 
        \underbracket{f_{x_t}(x_{t+1},s_t) - f(x_t,s_t)}_{= -\delta_t - \frac{1}{2\alpha_t}\|x_{t+1}-x_t\|^2} + f(x_{t},s_t) - f_{x_t}(\hat{x}_t,s_t) \\
        &\geq
        f(x_t,s_t) - f(\hat{x}_t, s_t) - \frac{\rho_{s_t}}{2}\|x_t-\hat{x}_t\|^2
        -\delta_t - \frac{1}{2\alpha_t}\|x_{t+1}-x_t\|^2.
    \end{align*}
    Plugging this back in \eqref{eqn:weak-conv-start-ineq},  multiplying by $2\alpha_t$, and taking conditional expectation, we get
    \begin{align*}
        (1-\lambda\alpha_t)\E_t\|x_{t+1} - \hat{x}_t\|^2 \leq
        (1+\alpha_t\rho)\|\hat{x}_t - x_t\|^2  - 2\alpha_t[f(x_{t}) - f(\hat{x}_t)] + 2\alpha_t \E_t[\delta_t].
    \end{align*}
    Since \ref{item:tA1}-\ref{item:tA2} imply weak convexity of $f$ and $\bar{\rho} > \rho+\lambda$, by definition of the proximal operator we have
    \begin{align*}
         f(x_t) - f(\hat{x}_t) \geq \frac{\bar{\rho}}{2}\|x_t-\hat{x}_t\|^2.
    \end{align*}
    Therefore, we get
    \begin{align*}
        (1-\lambda\alpha_t)\E_t\|x_{t+1} - \hat{x}_t\|^2 \leq
        \big(1 - \alpha_t(\bar{\rho}-\rho)\big)\|x_t - \hat{x}_t\|^2
        + 2\alpha_t \E_t[\delta_t].
    \end{align*}
    Multiplying by $(1-\lambda\alpha_t)^{-1}$ gives the final result.
\end{proof}
\begin{proof}[Proof of \cref{thm:weakly-convex}]
    Denote again $\hat{x}_t=\prox{(1/\bar{\rho})f}(x_t)$. By definition of the Moreau envelope and applying \cref{lem:weakly-convex-base}, we have
    \begin{align*}
        \E_t[\env{f}^{1/\bar{\rho}}(x_{t+1})] &\leq
        \E_t[f(\hat{x}_t) + \frac{\bar{\rho}}{2}\|\hat{x}_t - x_{t+1}\|^2] \\
        &\leq f(\hat{x}_t) +
        \frac{\bar{\rho}}{2} \Big[\|x_{t} - \hat{x}_t\|^2 - \frac{\alpha_t(\bar{\rho}-\rho-\lambda)}{1-\alpha_t\lambda}\|x_{t} - \hat{x}_t\|^2 + \frac{2\alpha_t}{1-\alpha_t\lambda} \E_t[\delta_t]\Big].
    \end{align*}
For the third term, we use that $\|x_{t} - \hat{x}_t\|^2 = (\bar{\rho})^{-2}\|\nabla \env{f}^{1/\bar{\rho}}(x_t)\|^2$ \citep[Lem.\ 2.2]{Davis2019}. Thus, we get
\begin{align*}
    \E_t[\env{f}^{1/\bar{\rho}}(x_{t+1})] \leq \env{f}^{1/\bar{\rho}}(x_{t})
    - \frac{\alpha_t(\bar{\rho}-\rho-\lambda)}{2\bar{\rho}(1-\alpha_t\lambda)} \|\nabla \env{f}^{1/\bar{\rho}}(x_t)\|^2
    + \frac{\alpha_t\bar{\rho}}{1-\alpha_t\lambda} \E_t[\delta_t].
\end{align*}
Applying total expectation, and summing over $t=k,\ldots,T$, we obtain
\begin{align*}
    \sum_{t=k}^{T} \frac{\alpha_t}{1-\alpha_t\lambda} \E\|\nabla \env{f}^{1/\bar{\rho}}(x_t)\|^2
    \leq \frac{2\bar{\rho}}{\bar{\rho}-\rho-\lambda} \big[\E[\env{f}^{1/\bar{\rho}}(x_{k})] - \E[\env{f}^{1/\bar{\rho}}(x_{T+1})]\big] + \frac{2\bar{\rho}^2}{\bar{\rho}-\rho-\lambda}\sum_{t=k}^{T}\frac{\alpha_t}{1-\alpha_t \lambda} \E[\delta_t].
\end{align*}
\end{proof}


\subsection{Proofs for \cref{sec:models}}\label{sec:app:proofs-deltas}

For the following proofs, recall the notation from \eqref{eqn:psi-delta}, that is
\begin{align*}
\Psi_t=f_{x_t}(x_{t+1},s_t) + \frac{1}{2\alpha_t}\|x_{t+1}-x_t\|^2.
\end{align*}

\begin{proof}[Proof of \cref{lem:delta-sps}]
Denote $C_t:=C_{s_t}$.
Due to \citet[Lem.\ 9]{Schaipp2023} it holds $x_{t+1} = x_t - \tau_t g_t$ with
\begin{align*}
    \begin{split}
    \Psi_t &= f(x_t, s_t) - \tau_t\|g_t\|^2 + \frac{\tau_t^2}{2\alpha_t}\|g_t\|^2 \\
    &= f(x_t, s_t) - \tau_t[1-\frac{\tau_t}{2\alpha_t}]\|g_t\|^2.
    \end{split}
\end{align*}
Since $ \Psi_t = f(x_t,s_t) -\delta_t^{\SPS{}}$  we have from the above that $\delta_t^{\SPS{}} = \tau_t[1-\frac{\tau_t}{2\alpha_t}]\|g_t\|^2$.
Given that the function $\tau \mapsto \tau(1-\frac{\tau}{2\alpha})$ is maximized at $\hat \tau=\alpha$, we conclude that $\delta_t^{\SPS{}} \leq \frac{\alpha_t}{2}\|g_t\|^2 = \delta_t^{\SGD{}}$ (assuming both $\delta_t$ would be measured at the same iterate $x_t$ and batch $s_t$).

Now, we treat the two cases separately:
\begin{itemize}
    \item If $\tau_t = \frac{f(x_t,s_t)-C_t}{\|g_t\|^2}$, we have $1-\frac{\tau_t}{2\alpha_t} \leq 1$ and hence $\delta_t^{\SPS{}} \leq \tau_t \|g_t\|^2 = f(x_t,s_t)-C_t \leq \alpha_t \|g_t\|^2$.
    \item In the other case, if $\tau_t=\alpha_t$ then $\delta_t^{\SPS{}} = \tau_t[1-\frac{\tau_t}{2\alpha_t}]\|g_t\|^2 = \frac{\alpha_t}{2}\|g_t\|^2 \leq \frac{1}{2}[f(x_t,s_t)-C_t]$.
\end{itemize}
Combining both cases we can upper-bound 
\begin{equation}
    \delta_t^{\SPS{}} \leq \min\{\alpha_t\|g_t\|^2, f(x_t,s_t)-C_t \}.
\end{equation}
Consequently $\delta_t^{\SPS{}}$ scales sub-linearly with $\alpha_t$.
\end{proof}

\begin{proof}[Proof of \cref{lem:delta-ngn}]
For $x_t\in \R^d$ and $g_t\in \partial f(x_t,s_t)$, denote $f_t:=f(x_t,s_t)$. \citet{Orvieto2024} show that update \eqref{eqn:update} with model \eqref{eqn:ngn-model} is given by \eqref{eq:NGNstep}.
Plugging into \eqref{eqn:ngn-model}, we get
\begin{align*}
    f_{x_t}(x_{t+1},s_t) &= 
    f_t - \gamma_t \|g_t\|^2 + \frac{1}{4f_t}\gamma_t^2 \|g_t\|^4,
\end{align*}
and $\frac{1}{2\alpha_t}\|x_{t+1}-x_t\|^2 = \frac{1}{2\alpha_t}\gamma_t^2\|g_t\|^2$.
Hence,
\begin{align*}
    \hspace{-3ex}\Psi_t
    &= f_t - \gamma_t \|g_t\|^2 + \gamma_t^2 \underbracket{\big[\frac{1}{4f_t} \|g_t\|^4 + \frac{1}{2\alpha_t}\|g_t\|^2 \big]}_{= \frac{\|g_t\|^2}{2\alpha_t}(1+\frac{\alpha_t\|g_t\|^2}{2f_t}) = \frac{\|g_t\|^2}{2\gamma_t}}\\
    &= f_t - \frac{\gamma_t}{2} \|g_t\|^2.
\end{align*}
Altogether, we obtain $\delta_t^{\NGN{}} = f_t - \Psi_t = \frac{\gamma_t}{2} \|g_t\|^2.$ From $\gamma_t \leq \alpha_t$ it follows that $\delta_t^{\NGN{}} \leq \delta_t^{\SGD{}}$. 
\end{proof}

\begin{proof}[Proof of \cref{lem:delta-spp}]
    We have
    $\Psi_t  - f(x_t,s_t) = f(x_{t+1}, s_t) - f(x_t,s_t) + \frac{1}{2\alpha_t}\|x_{t+1}-x_t\|^2.$
    For any $\alpha_t \in (0, 1/\bar{\eta})$, it holds
    \begin{align*}
        \Psi_t = \min_{y\in \R^d} f(y,s_t) + \frac{1}{2\alpha_t}\|y-x_t\|^2
        \geq \min_{y\in \R^d} f(y,s_t).
    \end{align*}
    Hence, we have
    \begin{align}\label{eq:spp-bound-one}
    \begin{split}
        \Psi_t  - f(x_t,s_t) &\geq \inf_{y\in \R^d} f(y,s_t) - f(x_t,s_t).
    \end{split}
    \end{align}
    On the other hand, due to $\bar{\eta}$-weak convexity of $f(\cdot,s)$ we have
    \begin{align*}
        & \hspace{-3ex} f(x_{t+1},s_t) \geq f(x_t,s_t) + \iprod{g_t}{x_{t+1}-x_t} - \frac{\bar{\eta}}{2}\|x_{t+1}-x_t\|^2 \\
        &\geq f(x_t,s_t) - \frac{\epsilon}{2}\|g_t\|^2 - (\frac{1}{2\epsilon}+ \frac{\bar{\eta}}{2} )\|x_{t+1}-x_t\|^2,
    \end{align*}
    where we use Young's inequality with $\epsilon>0$ in the second step. Setting $\epsilon=\frac{\alpha_t}{1-\alpha_t \bar{\eta}} > 0$, we get
    \begin{align}\label{eq:spp-bound-two}
        \Psi_t - f(x_t,s_t) \geq -\frac{\alpha_t}{2(1-\alpha_t \bar{\eta})}\|g_t\|^2.
    \end{align}
    Combining \eqref{eq:spp-bound-one} and \eqref{eq:spp-bound-two} , we have
    \begin{align*}
        &\hspace{-3ex}\Psi_t - f(x_t,s_t) \\
        & \geq \max\{-\frac{\alpha_t}{2(1-\alpha_t \bar{\eta})}\|g_t\|^2 ,\inf_{y\in \R^d} f(y,s_t) - f(x_t,s_t)\} \\
        &= -\min\{\frac{\alpha_t}{2(1-\alpha_t \bar{\eta})}\|g_t\|^2, f(x_t,s_t) - \inf_{y\in \R^d} f(y,s_t)\}.
    \end{align*}
    From \eqref{eqn:psi-delta}, this implies~$\delta_t^{\SPP{}} \leq \min\{\frac{\alpha_t}{2(1-\alpha_t \bar{\eta})}\|g_t\|^2, f(x_t,s_t) - \inf_{y\in \R^d} f(y,s_t)\}$.
\end{proof}


\subsection{Quadratics as a Special Case}\label{sec:app:quadratics}

Consider the class of functions $f(x,s):=\frac{s^2}{2}\|x-x_\star\|^2$ for $s\in \R$ and some fixed $x_\star \in \R^d$. This represents a linear regression problem with interpolation condition, i.e.\ $x_\star = \argmin_x f(x,s)$ for all $s\in \mathcal{S}$ and hence $\sigma^2=0$. In this case, the formulae for  $\delta_t^{\{\texttt{SGD},\texttt{SPS},\texttt{NGN},\texttt{SPP}\}}$ simplify a lot, and therefore this serves as a good illustration of the different stability behaviors.

First, note that $\|\nabla f(x,s)\|^2 = 2s_t^2f(x,s)$ for any $x\in\R^d,~s\in\R$. From now on, assume that $x_t\in \R^d$ is the (fixed) current iterate, and denote $f_t:=f(x_t,s_t) = \frac{\|g_t\|^2}{2s_t^2}$. For \texttt{SPS},  we can clearly use $C_t=0$ in this case, and obtain $\tau_t = \min\{\alpha_t, \frac{f_t}{\|g_t\|^2}\} = \min\{\alpha_t, \frac{1}{2s_t^2}\}$. Plugging into the exact formula for $\delta_t^{\texttt{SPS}}$ from \cref{lem:delta-sps}, and doing a simple case distinction, we obtain $\delta_t^{\texttt{SPS}} \leq \min\{\frac{\alpha_t}{2}, \frac{1}{2s_t^2}\}\|g_t\|^2$. Note that this bound is relatively tight, and can be made exact at the cost of a less compact expression.
For \NGN{}, we have $\gamma_t = \frac{\alpha_t}{1+\alpha_ts_t^2} \leq \frac{1}{s_t^2}$ for all $\alpha_t > 0$.
For \SPP{}, we can compute $x_{t+1} = \frac{1}{1+\alpha_t s_t^2}[x_t + \alpha_ts_t^2x_\star]$ and $\frac{1}{2\alpha_t}\|x_{t+1}-x_t\|^2 = \frac12 \frac{\alpha_ts_t^4}{(1+\alpha_ts_t^2)^2}\|x_t-x_\star\|^2 =  \frac{\alpha_t s_t^2}{(1+\alpha_ts_t^2)^2}f_t$. This leads to $\delta_t^{\texttt{SPP}} = \frac{\alpha_ts_t^2}{1+\alpha_ts_t^2}f_t$.

Altogether, we obtain the following stability indices:
\begin{align*}
    \delta_t^{\texttt{SGD}} &= \frac{\alpha_t}{2}\|g_t\|^2, \\
    \delta_t^{\texttt{SPS}} &\leq \min\{\frac{\alpha_t}{2}, \frac{1}{2s_t^2}\}\|g_t\|^2, \\
    \delta_t^{\texttt{NGN}} &= \frac{\alpha_t}{2(1+\alpha_ts_t^2)}\|g_t\|^2, \\
    \delta_t^{\texttt{SPP}} &= \frac{\alpha_t}{2}\frac{1}{1+\alpha_ts_t^2}\|g_t\|^2. \\
\end{align*}
In conclusion, only $\delta_t^{\texttt{SGD}}$ scales linearly with $\alpha_t$, whereas for all other methods, their stability index does not grow linearly when $\alpha_t\to \infty$.

\section{Discussion of Related Work}

\subsection{Comparison of Convergence Theorems to the Literature} \label{sec:app:conv-comparison}

Here we consider our resulting convergence results \cref{thm:avg-iterate,thm:last-iterate}, when instantiated with each model, and compare to what is known in the literature. For the linear model and the resulting \SGD{} method, we recover the standard rates as pointed out in~\cref{sec:linearmodel}.

For \eqref{eqn:sps} and \eqref{eqn:ngn} we have shown in~\cref{lem:delta-sps} and \cref{lem:delta-ngn} respectively, that these methods have a stability constant $\delta_t$ that is less or equal than that of of \SGD{}, that is 
\begin{equation} \label{eq:zefzefzfzeg}
    \delta_t \;\leq \; \delta_t^{\SGD{}} =  \frac{\alpha_t}{2}\|g_t\|^2.
\end{equation}  Using this bound in \cref{thm:avg-iterate,thm:last-iterate} we immediately arrive at a convergence result for \SPS{} and \NGN{}. However, this convergence result will be pessimistic, since we bound away the possible stability benefits of \SPS{} and \NGN{} when using~\eqref{eq:zefzefzfzeg}. 

Using the pessimistic upper bound~\eqref{eq:zefzefzfzeg}
in~\cref{thm:avg-iterate} with a constant step size $\alpha_t \equiv \alpha$ we arrive at the convergence rate
    \begin{align} \label{eq:theospspess}
        \bar f_T - f(x_\star) \leq \frac{D^2}{2T \alpha } + \alpha \frac{\sum_{t=1}^T  \E[\|g_t\|^2]}{2T}.
    \end{align}
    By further assuming the loss is Lipschitz continuous with $\|g_t\| \leq G$ and a step size of $\alpha = \frac{1}{\sqrt{T}}$ this gives a $\mathcal{O}(1/\sqrt{T})$ complexity.
This is exactly the rate of \SGD{} in the convex and Lipschitz setting, 
see for the an early such proof~\citet{pmlr-v28-shamir13} and Theorem 9.7 in~\citet{handbook} for a didactic reference.

For~\ref{eqn:sps} (introduced by~\citet{Loizou2021} and called \SPS{$_{\max}$} therein), as far as we are aware, the only result for convex and Lipschitz losses also requires interpolation \citep[Thm.\ C.1]{Loizou2021}, which gives a $\mathcal{O}(1/\sqrt{T})$ complexity.

As for the \NGN{} method, there currently exists only an analysis of the method for the Lipschitz-smooth setting. Thus~\eqref{eq:theospspess} is the first complexity for \NGN{} in the convex non-smooth setting.


\subsection{Additional Remarks on Related Work}\label{sec:app:related-work}
The theory by \citet{Loizou2021} suggests that \SPS{} can achieve the standard convergence rates without knowledge of the Lipschitz(-smoothness) constant.
More concretely, \citet[Thm.\ 3.4]{Loizou2021} shows that if $\alpha_t = \alpha$ for all $t\in \N$, and if each $f(\cdot,s)$ is convex and $L_s$-smooth, we have
\begin{align*}
    \E[f(\frac{1}{T}\sum_{t=1}^T x_t) - f(x_\star)] \leq \frac{\|x_1-x_\star\|^2}{\rho T} + \frac{2\sigma^2 \alpha}{\rho}.
\end{align*}
Here $\rho := \min\{\frac{1}{2L_{\max}}, \alpha\}$ and $L_{\max} = \max_s L_s$.
However, this theory can not explain why \SPS{} is less sensitive to the choice of $\alpha$: for $\alpha \geq \frac{1}{2L_{\max}}$, the bound is increasing in $\alpha$ linearly (as $\rho=\frac{1}{2L_{\max}}$). Moreover, in the smooth case, it holds 
$$\frac{f(x_t,s_t)-\inf_z f(z, s_t)}{\|g_t\|^2} \geq \frac{1}{2L_{s_t}} \geq \frac{1}{2L_{\max}}.$$
For \texttt{ResNet20} on \texttt{CIFAR10}, the term on the left is around $10^{-1}$ in early training, suggesting that $2 L_{\max} \geq 10^1$ and hence the bound will be increasing for $\alpha \geq 10^{-1}$. But in practice, \SPS{} achieves the best performance for much larger values of $\alpha$ (in between $1$ and $10$, see \cref{fig:cifar10-loss-and-bound}).

For stochastic proximal point (\SPP{}), several analyses show that convergence can be proven for any step size, for example \citet[Thm.\ 5.3, 6.4]{Tovmasyan2025}; however, these analyses derive rates for \SPP{} under various set of (additional) assumptions, such as smoothness, strong convexity, or interpolation; our analysis on the other hand focuses on the one-step stability index $\delta_t$, which allows to compare stability across methods.

\citet{Orabona2026} show negative results for stability and convergence of the Polyak step size; however, their results are not in conflict with ours, as their examples involve suboptimal estimates for $C_s$ and require large values for $\alpha_t$ (some results only hold for $\alpha_t = + \infty$). This is in line with our analysis, as the stability index becomes worse when both $\epsilon_{\text{lb}}$ and $\alpha_t$ grow simultaneously.

\section{Experiments: Supplementary Material}\label{sec:app:experiments-supplementary}

\paragraph{Dataset generation for linear regression.}
The data generation procedure was adapted from \citet{Loizou2021} and the reference therein: we first generate a matrix $\bA \in\R^{n\times d}$ with entries from a standard normal. We add one to all entries to introduce a bias. We then scale each column with a factor drawn from $\mathcal{N}(0,1)$ multiplied by ten. We now sparsify $\bA$ by setting each entry to zero with probability $1-30\tfrac{\ln(n)}{n}$. We finally scale each column to have norm $10$ (this step affects mainly the scaling of the step size).

We then generate a vector $\hat{x} \in\R^d$ with entries from a standard normal, re-scaled such that $\|\hat{x}\|=1$. We generate $\bb = \bA \hat{x}$; in the noisy setting, we add a vector with entries from a standard normal to $\bA$.

\section{Generalizations of the \NGN{} Step Size}\label{sec:app:ngn-generalizations}

The \NGN{} model \eqref{eqn:ngn-model} is constructed as square-root $\to$ linearize $\to$ square. This idea can be generalized to other functions on the non-negative halfspace, and their respective inverse function. Below, we derive the model-based update when using instead $\log$ $\to$ linearize $\to$ $\exp$. We will see that the resulting update is related to the Lambert--$W$ function.

\paragraph{General maps.} Let $U\subseteq  \R_{\geq 0}$ and $\varphi: U \to \R$ and $\psi: \R \to \R$ satisfy the following:
\begin{enumerate}[label=(B\arabic*)]
    \item \label{assum:phi-psi-one} For any $x \in U$ we have $\psi(\varphi(x)) = x$.
    \item \label{assum:phi-psi-two} $\psi$ is differentiable and convex on $\R$.
    \item \label{assum:phi-psi-three} $\varphi$ is differentiable on $U$ with $\varphi'(x) \neq 0$ for all $x\in U$.
\end{enumerate}
Second, \ref{assum:phi-psi-one} implies that $\varphi$ is injective. Therefore, $\varphi$ must be either monotonically increasing or decreasing. If $\varphi$ is concave, then its first derivative is non-increasing, and hence $\varphi'(0) >0$ is sufficient for \ref{assum:phi-psi-three} to hold with $U=\R_{\geq 0}$. 

We will usually choose $U=\R_{>0}$ or $U=\R_{\geq 0}$ depending on the loss function being strictly positive or non-negative. (In the latter case, assume that there exists a continuous extension of $\varphi'$ on $\R_{\geq 0}$ with $\varphi'(0)\neq 0$).  

Then, for $g\in \partial f(x,s)$, define the model
\begin{align} \label{eq:phi-psi-model}
    f_x(y,s) = \psi\big(\varphi(f(x,s)) + \varphi'(f(x,s))\iprod{g}{y-x}\big).
\end{align}

First, note that Assumption~\ref{item:A1} holds due to convexity of $\psi$. Denote $f_t := f(x_t,s_t)$, and 
let $u_t = \varphi'(f_t)$. It holds
\begin{align*}
    x_{t+1} = \argmin_{y\in \R^d}  f_{x_t}(y,s_t) + \frac{1}{2\alpha_t} \|y-x_t\|^2
\end{align*}
if and only if 
\begin{align*}
    x_{t+1} = x_t - \alpha_t u_t v_t g_t, \quad v_t := \psi'\big(\varphi(f_t) + \varphi'(f_t)\iprod{g_t}{x_{t+1}-x_t}\big). 
\end{align*}
Therefore, we get the condition
\begin{align*}
    v_t = \psi'\big(\varphi(f_t) - \alpha_t v_t u_t^2 \|g_t\|^2\big).
\end{align*}
We can compute
\begin{align*}
    f_{x_t}(x_{t+1},s_t) = \psi\big(\varphi(f_t) - \alpha_t v_t u_t^2 \|g_t\|^2\big),
\end{align*}
and hence
\begin{align}\label{eqn:psi-phi-delta}
\begin{split}
    \delta_t &= f_t - \frac{1}{2\alpha_t} \|x_{t+1}-x_t\|^2 - f_{x_t}(x_{t+1},s_t)  \\
    &= f_t - \frac12 \alpha_t u_t^2 v_t^2 \|g_t\|^2 - \psi\big(\varphi(f_t) - \alpha_t v_t u_t^2 \|g_t\|^2\big).
\end{split}
\end{align}
Let us focus on the last term: using convexity, we have $\psi(a+b) \geq \psi(a) + \psi'(a)b$, and hence
\begin{align*}
    \psi\big(\varphi(f_t) - \alpha_t v_t u_t^2 \|g_t\|^2\big) \geq \psi(\varphi(f_t)) - \psi'(\varphi(f_t)) \alpha_t v_t u_t^2 \|g_t\|^2.
\end{align*}

Now, from \ref{assum:phi-psi-one} and the chain rule it follows that $1 = \psi'(\varphi(x))\varphi'(x)$ holds for all $x\in U$. Due to \ref{assum:phi-psi-three}, it follows $\psi'(\varphi(f_t)) = 1/\varphi'(f_t) = 1/u_t$.
Together with \ref{assum:phi-psi-one}, we get
\begin{align*}
    \psi\big(\varphi(f_t) - \alpha_t v_t u_t^2 \|g_t\|^2\big) \geq f_t - \alpha_t v_t u_t \|g_t\|^2.
\end{align*}
Plugging back into \eqref{eqn:psi-phi-delta}, we have
\begin{align*}
    \delta_t &\leq f_t - \frac12 \alpha_t u_t^2 v_t^2 \|g_t\|^2 - \big(f_t - \alpha_t v_t u_t \|g_t\|^2\big)\\
    &= \frac{\alpha_t}{2} \|g_t\|^2 [2v_tu_t - u_t^2 v_t^2] \leq \frac{\alpha_t}{2} \|g_t\|^2,
\end{align*}
where the last step uses $1 -2u_tv_t + u_t^2 v_t^2 = (1-u_tv_t)^2 \geq 0$.

\paragraph{Using log and exponential.} Consider \eqref{eq:phi-psi-model} with $\psi = \exp$ and $\varphi = \log$, that is
\begin{align}\label{eqn:ngn-model-exp}
    f_{x}(y,s) = \exp \big(\log{f(x,s)} + \frac{1}{f(x,s)}\iprod{g}{y-x} \big).
\end{align}
The model-based update \eqref{eqn:update} has the necessary and sufficient optimality condition
\begin{align*}
    0 = \exp \big(\tfrac{1}{f(x_t,s_t)}\iprod{g_t}{y-x_t} \big) g_t + \frac{1}{\alpha_t}(y-x_t).
\end{align*}
Using the Ansatz $y=x_t - \gamma_t g_t$, we obtain
\begin{align}\label{eqn:exp-log-condition-gamma}
    \gamma_t = \alpha_t \exp \big(-\gamma_t\frac{\|g_t\|^2}{f(x_t,s_t)}\big).
\end{align}
Let $W_0$ be the principal branch of the Lambert--$W$ function, which is defined as the inverse function of $z\mapsto ze^z$. Multiplying \eqref{eqn:exp-log-condition-gamma} by $\frac{\|g_t\|^2}{f(x_t,s_t)}\exp \big(\gamma_t\frac{\|g_t\|^2}{f(x_t,s_t)}\big)$, we can see that \eqref{eqn:exp-log-condition-gamma} has the solution 
\begin{align}\label{eqn:lambertw-step-size}
    \gamma_t = \frac{f(x_t,s_t)}{\|g_t\|^2} W_0(\alpha_t \frac{\|g_t\|^2}{f(x_t,s_t)}) \quad \text{if } g_t \neq 0
    \text{ and else } \gamma_t=\alpha_t.
\end{align}
The solution lies on the principal branch as $\alpha_t \frac{\|g_t\|^2}{f(x_t,s_t)} > 0$.

Denote again $f_t = f(x_t,s_t)$. Further, we have $u_t = \frac{1}{f_t}$ and $\alpha_tu_tv_t = \gamma_t$. Hence, $v_t = \exp(\log f_t - \frac{\gamma_t}{f_t}\|g_t\|^2) = f_t \exp(- \frac{\gamma_t}{f_t}\|g_t\|^2) = \frac{f_t \gamma_t}{\alpha_t}$. We can plug this into \eqref{eqn:psi-phi-delta}, we get
\begin{align*}
    \delta_t &= f_t - \frac{\gamma_t^2}{2\alpha_t}\|g_t\|^2 - \exp(\log f_t - \frac{\gamma_t}{f_t}\|g_t\|^2) = f_t - \frac{\gamma_t^2}{2\alpha_t}\|g_t\|^2 - f_t\exp(- \frac{\gamma_t}{f_t}\|g_t\|^2) \\
    &= f_t - \frac{\gamma_t^2}{2\alpha_t}\|g_t\|^2 - \frac{f_t\gamma_t}{\alpha_t}.
\end{align*}

To the best of our knowledge, the adaptive step size \eqref{eqn:lambertw-step-size} is novel in the literature. In preliminary experiments, we observe that \eqref{eqn:lambertw-step-size} is also very stable with respect to the choice of $\alpha_t$. However, we do not further investigate the behavior of this new step size here, as it would distract from the focus of this paper. Analyzing \eqref{eqn:lambertw-step-size} might be interesting future work.

\paragraph{Datasets for logistic regression.}

We use the scaled versions of the datasets when available. We always use $20\%$ of the \texttt{LIBSVM} train split as validation set. Further, we drop the last batch if it is smaller than the batch size. This explains the difference between the sample size reported here and the one on the \texttt{LIBSVM} website.

We run 10 epochs for \texttt{dna} and 20 epochs for \texttt{vowel}.

\begin{table}[H]
    \centering
    \caption{Dataset information for multi-class logistic regression experiments.}
    \label{tab:logreg-datasets}
    \begin{tabular}{|c|cccc|}
         \hline
         Name & Dimension $d$ & Samples $n$ & Class labels $C$ & Batch size $b$ \\
         \hline
         \texttt{vowel} & 10 & 416 & 11 & 16 \\
         \texttt{dna} & 180 & 1600 & 3 & 16  \\
         \hline
    \end{tabular}
\end{table}

\section{Additional Plots and Experiments}

\begin{figure}[H]
    \begin{subfigure}{0.49\columnwidth}
       \centering
        \includegraphics[width=0.99\linewidth]{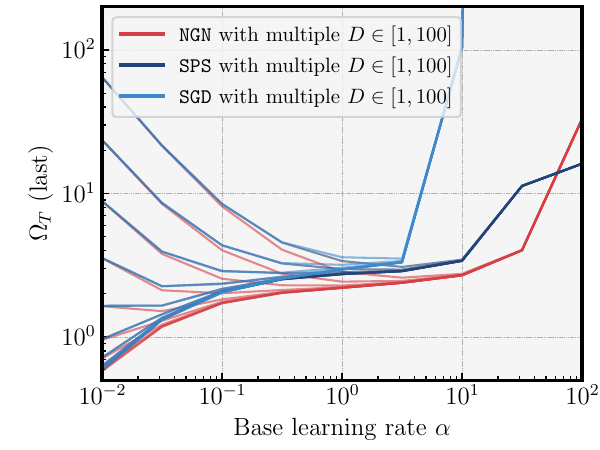}
        \caption{without warmup}
    \end{subfigure}
    \begin{subfigure}{0.49\columnwidth}
       \centering
        \includegraphics[width=0.99\linewidth]{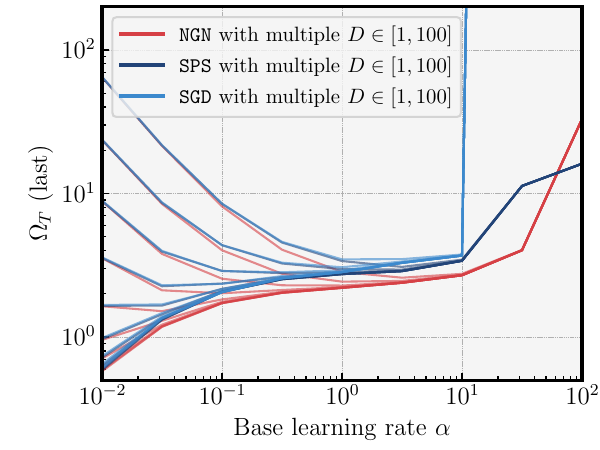}
        \caption{with warmup}
    \end{subfigure}
    \caption{\texttt{ResNet20} on \texttt{CIFAR10}: Bound $\Omega_T$ from \cref{thm:last-iterate} for multiple values of $D \in [10^0, 10^2]$. The advantage of \SPS{} and \NGN{} over \SGD{} for large $\alpha$ is reflected in the bound independent of the value of $D$.}
    \label{fig:cifar10-multipleD}
\end{figure}
%
%
\begin{figure}[H]
    \begin{subfigure}{0.45\columnwidth}
       \centering
        \includegraphics[width=0.99\linewidth]{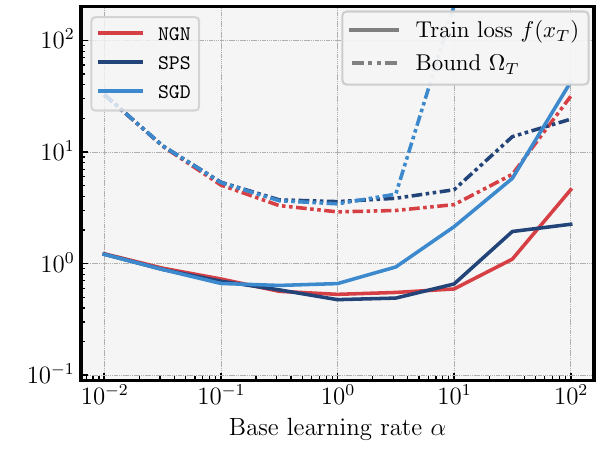}
        \caption{without warmup}
    \end{subfigure}
    \begin{subfigure}{0.45\columnwidth}
       \centering
        \includegraphics[width=0.99\linewidth]{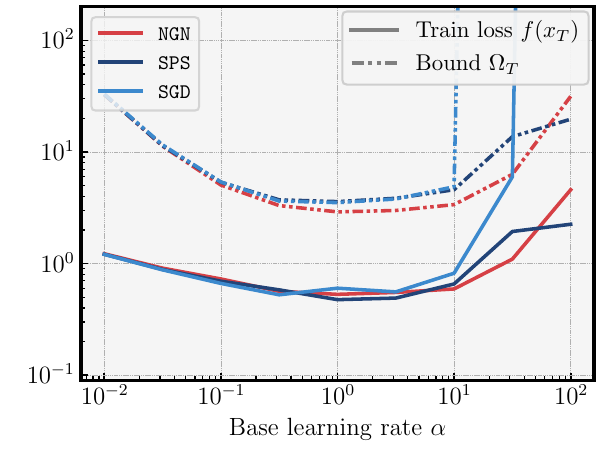}
        \caption{with warmup}
    \end{subfigure}
    \caption{\texttt{ResNet20} on \texttt{CIFAR10}: Same as \cref{fig:cifar10-loss-and-bound}, but computed after 10 instead of 20 epochs.}
    \label{fig:cifar10-loss-and-bound-ablate-T}
\end{figure}
%
%
\begin{figure}[H]
    \begin{subfigure}{0.45\columnwidth}
       \centering
        \includegraphics[width=0.99\linewidth]{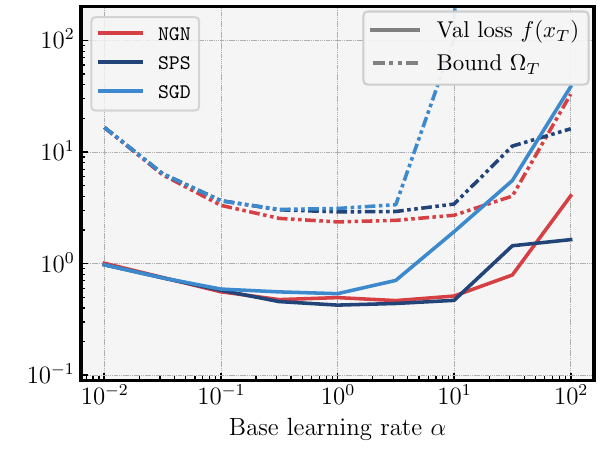}
        \caption{without warmup}
    \end{subfigure}
    \begin{subfigure}{0.45\columnwidth}
       \centering
        \includegraphics[width=0.99\linewidth]{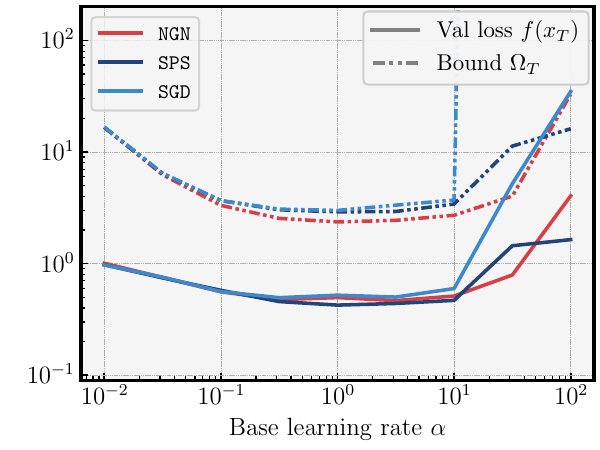}
        \caption{with warmup}
    \end{subfigure}
    \caption{\texttt{ResNet20} on \texttt{CIFAR10}: Same as \cref{fig:cifar10-loss-and-bound}, but plotting validation loss instead of train loss.}
    \label{fig:cifar10-loss-and-bound-ablate-val}
\end{figure}
%

\begin{figure}[H]
    \begin{subfigure}{0.45\columnwidth}
       \centering
        \includegraphics[width=0.99\linewidth]{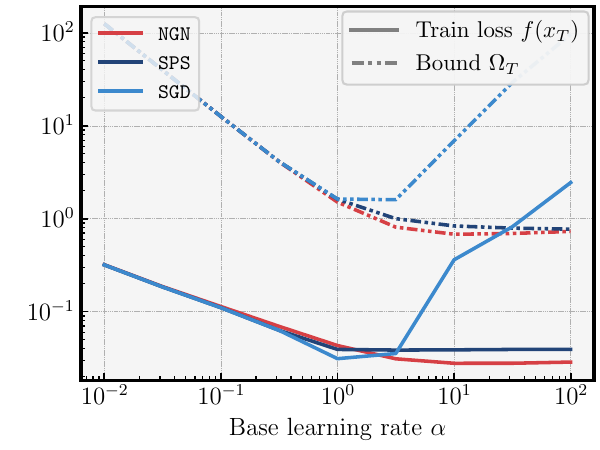}
    \end{subfigure}
    \begin{subfigure}{0.45\columnwidth}
       \centering
        \includegraphics[width=0.99\linewidth]{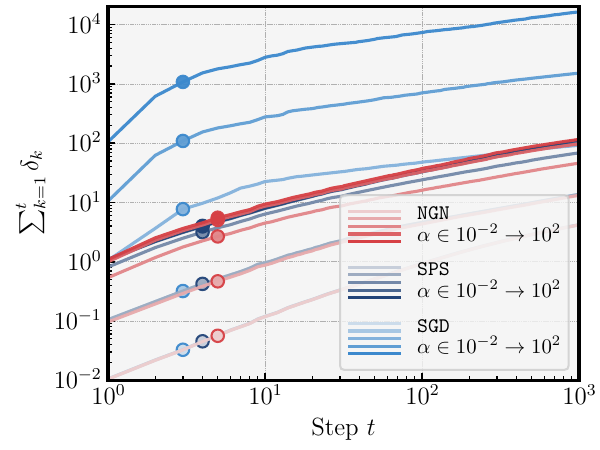}
    \end{subfigure}
    \caption{Same as \cref{fig:vowel-bound-and-delta} but for logistic regression on \texttt{dna} dataset.}
    \label{fig:dna-bound-and-delta}
\end{figure}

\end{document}